\documentclass[3p,12pt]{mystyle}

\usepackage[T1]{fontenc}
\usepackage[latin1]{inputenc}

\usepackage{amsmath,amssymb,theorem}

\usepackage{amsfonts}
\usepackage{wasysym}

\usepackage{epsfig}
\usepackage{epic}
\usepackage{eepic}
\usepackage{graphics}
\usepackage{graphicx}
\usepackage{color}
\usepackage{longtable}
\usepackage{subfigure}





\newcommand{\ds}{\displaystyle}
\newcommand{\ts}{\textstyle}

\newcommand{\Ord}{\mathcal{O}}

\renewcommand{\Re}{\operatorname{Re}}
\renewcommand{\Im}{\operatorname{Im}}

\newcommand{\Nn}{{\mathbb N}}

\newcommand{\Rr}{{\mathbb R}}

\newcommand{\Zz}{{\mathbb Z}}

\newcommand{\vph}{\varphi}

\newcommand{\Sd}{\mathbb{S}^2}

\newcommand{\vect}[1]{\boldsymbol{#1}}

\DeclareMathOperator{\Iop}{I}
\newcommand{\I}{\boldsymbol{\Iop}}
\newcommand{\Imm}{\I^{(\vect{m})}}
\newcommand{\Immp}{\I^{(\vect{m})}}
\newcommand{\Imms}{\I_{\mathrm{S}}^{(\vect{m})}}

\DeclareMathOperator{\Jop}{J}
\newcommand{\J}{\vect{\Jop}}
\newcommand{\Jmm}{\J^{(\vect{m})}}

\DeclareMathOperator{\LSop}{LS}
\newcommand{\LS}{\boldsymbol{\LSop}}

\newcommand{\LSm}{\LS^{(\vect{m})}}

\newcommand{\Gam}{\vect{\Gamma}^{(\vect{m})}}
\newcommand{\Gamstar}{\vect{\Gamma}^{(\vect{m}),*}}
\newcommand{\Gamsup}{\overline{\vect{\Gamma}}^{(\vect{m})}}

\newcommand{\Gamup}{\vect{\Gamma}^{(\vect{m}),\mathrm{U}}}
\newcommand{\Gamdown}{\vect{\Gamma}^{(\vect{m}),\mathrm{D}}}

\newcommand{\Lissajous}{\vect{\ell}^{(\vect{m})}_{\alpha}}

\newcommand{\Pim}{\Pi^{(\vect{m})}}
\newcommand{\Pimreal}{\Pi^{(\vect{m})}_{\mathcal{R}}}
\newcommand{\Pims}{\Pi^{(\vect{m})}_{\mathrm{S}}}

\newcommand{\Dx}[1]{\mathrm{d}#1}

\newcommand{\polbas}{\chi^{(\vect{m})}_{\vect{\gamma}}}
\newcommand{\polbasreal}{\chi^{(\vect{m})}_{\mathcal{R},\vect{\gamma}}}
\newcommand{\funpol}{\mathcal{L}}
\newcommand{\funpols}{\mathcal{L}_{\mathrm{S}}}

\newcommand{\polbascont}{{X}_{\vect{\gamma}}}
\newcommand{\polbascontreal}{{X}_{\mathcal{R},\vect{\gamma}}}

\usepackage[scr=boondoxo,scrscaled=1.05]{mathalfa}
\newcommand{\ff}{\mathscr{f}}

\newtheorem{theorem}{Theorem}

\newtheorem{proposition}[theorem]{Proposition}
\newtheorem{corollary}[theorem]{Corollary}
\newdefinition{remark}{Remark}
\newdefinition{example}{Example}
\newproof{proof}{Proof}

\begin{document}

\begin{frontmatter}
\title{A spectral interpolation scheme on the unit sphere based on the nodes of spherical Lissajous curves}

\author{Wolfgang Erb}
\ead{erb@math.hawaii.edu}

\address{ University of Hawai'i at M\=anoa, 
          Department of Mathematics \\ 2565 McCarthy Mall, Keller Hall 401A,
          Honolulu, HI, 96822
          }

\date{\today}

\begin{abstract}
For sampling values along spherical Lissajous curves we establish a spectral interpolation and quadrature scheme on the sphere.
We provide a mathematical analysis of spherical Lissajous curves and study the characteristic properties of their intersection
points. Based on a discrete orthogonality structure we are able to prove the unisolvence of the interpolation problem.
As basis functions for the interpolation space we use a parity-modified double Fourier basis on the sphere which allows us to implement the interpolation scheme
in an efficient way. We further show that the numerical condition number of the interpolation scheme displays a logarithmic growth. 
 As an application, we use the developed interpolation algorithm to estimate the rotation of an object based on measurements at
the spherical Lissajous nodes.
\end{abstract}

\begin{keyword}
Spectral interpolation on the sphere \sep Parity-modified Fourier series \sep Spherical Lissajous curves \sep Intersection points of Lissajous curves 
\sep Clenshaw-Curtis quadrature, rotation estimation on the sphere
\end{keyword}

\end{frontmatter}

\section{Introduction} \label{sec:introduction}

In Magnetic Resonance Imaging, motions of the scanned subject during the imaging process cause artifacts in the reconstruction. One concept to
detect and correct subject motions in 3D is based on the additional measurements of spherical navigator echoes \cite{Costa2010,Welchetal2002}. These measurements are
performed along sampling trajectories on a spherical shell and used to estimate rotations and translations of the scanned subject.
Particular promising trajectories for such navigator measurements are spherical Lissajous curves \cite{Ullisch2012}.

A \emph{spherical Lissajous curve} is given in parametric form as
\begin{equation} \label{201509161237}
\Lissajous(t) = \Big( \sin(m_2 t) \cos (m_1 t - \alpha \pi), \ \sin(m_2 t) \sin (m_1 t - \ts \alpha \pi), \ \cos(m_2 t)  \Big), \quad t \in \mathbb{R},
\end{equation}
with a frequency vector $\vect{m} = (m_1, m_2) \in\Nn^{2}$ and a rotation parameter $\alpha \in \Rr$. The curve $\Lissajous$
lies in the unit sphere $\Sd = \{\vect{x} \in \Rr^3:\; x_1^2+x_2^2+x_3^2 = 1\}$ of the three-dimensional space $\Rr^3$. Similar as for bivariate Lissajous curves \cite{DenckerErb2017a,DenckerErb2015a,Erb2015,ErbKaethnerAhlborgBuzug2015},
the curve $\vect{\ell}^{(\vect{m})}_{\alpha}$
describes a superposition of a latitudinal and a longitudinal harmonic motion determined by the frequencies $m_1$ and $m_2$.

The goal of this article is to provide a mathematical analysis of spherical Lissajous curves and to study their role as generating curves
for spherical interpolation. Of particular interest for our analysis are the intersection points $\LSm$ of one or more spherical Lissajous curves. These 
Lissajous nodes provide a good measure of how densely the curves cover the sphere. Further, these nodes are relevant for applications. In \cite{Ullisch2012}, the intersection points of spherical Lissajous curves are used to correct
spin-spin relaxation effects for the navigator measurements. 

In the upcoming sections, we will give two different characterizations of the Lissajous nodes $\LSm$. Particularly interesting for us and for applications is the case
when the two frequencies $m_1$ and $m_2$ are relatively prime and $m_2$ is even. In this case, the nodes $\LSm$ can be described as time equidistant samples along a single spherical Lissajous curve. The restriction to even numbers $m_2$ guarantees that the two poles of $\Sd$ are included in $\LSm$. The second characterization in Section \ref{sec:nodesphere} is based on particularly defined index sets $\Immp$ and allows an explicit description of the nodes $\LSm$. It includes also the case when $m_1$ and $m_2$ are not relatively prime and when more than one Lissajous curve is needed to generate the nodes $\LSm$.

The main task of this manuscript is to use the spherical Lissajous nodes $\LSm$ to derive a novel scheme for interpolation and quadrature on the sphere.
To this end, we transfer concepts and techniques developed in \cite{DenckerErb2017a,DenckerErb2015a} for multivariate polynomial interpolation 
on Lissajous-Chebyshev nodes in the hypercube $[-1,1]^{\mathsf{d}}$ to a corresponding setting in spherical coordinates. 
As for the multivariate Lissajous-Chebyshev points, a main step in the proof of the spherical interpolation scheme is a
discrete orthogonality structure linked to the spherical Lissajous nodes. This discrete orthogonality structure will be derived in Section \ref{1507091240}.

As a basis system for the interpolation on the Lissajous nodes we will use a parity-modified double Fourier basis in spherical coordinates. These basis functions
were introduced in the 70's \cite{Merilees1973,Orszag1974} as a stable alternative to
the spherical harmonics and the Robert functions. Since then, they were used in a series of applications on the sphere
as for instance described in \cite[Section 18.27]{Boyd2000} and \cite{BoerSteinberg1975,Boyd1978,Fornberg1995,Ganesh1998,TownsendWilberWright2016}. As these functions are directly built on a Fourier series they are very well suited for computational purposes. Compared to spherical harmonics there are however some issues at the poles of the
sphere which have to be treated properly. For a more detailed discussion on different aspects of this basis system we refer to the treatises given in \cite{Boyd1978,Boyd2000}.

For the actual work the concrete form of the parity-modified Fourier basis plays a crucial role. We will establish a close link between interpolation
on the nodes of the spherical Lissajous curves and this basis system. This discussion will lead to Theorem \ref{201512131945} in which we prove the
uniqueness of the interpolation in spaces spanned by the double Fourier basis. In Section \ref{sec:implementation}, we will discover that the
mentioned structure of the basis functions leads to an efficient implementation of the interpolation scheme in terms of a double Fourier transform. In Section \ref{sec:convergence} we will further give a short description of the numerical condition number and the convergence of the interpolation scheme. We will see that, similar to spectral methods on the hypercube $[-1,1]^{\mathsf{d}}$, the numerical condition displays a slow logarithmic growth and that the interpolant converges fast if the data values are derived from smooth functions. 

Finally, we will provide some numerical experiments and present an idea on how this novel interpolation scheme on the sphere can be applied
to estimate rotations of a three-dimensional object based on measurements along spherical Lissajous curves.

\section{Spherical Lissajous curves} \label{sec:lisasphere}

\begin{figure}[htb]
	\centering
	\subfigure[\hspace*{1em} The curve $\vect{\ell}^{(7,6)}_{0}$ and the nodes $\LS^{(7,6)}_{0}$.
	]{\includegraphics[scale=0.9]{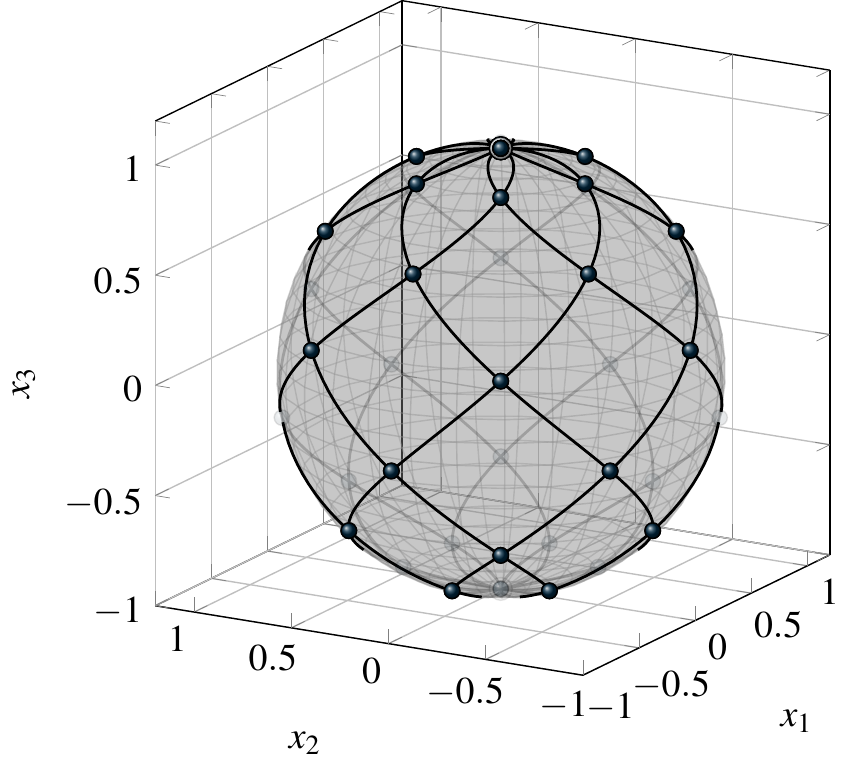}}
	\hfill	
	\subfigure[\hspace*{1em} The curve $\vect{\ell}^{(6,5)}_{0}$ and its intersection points.
	]{\includegraphics[scale=0.9]{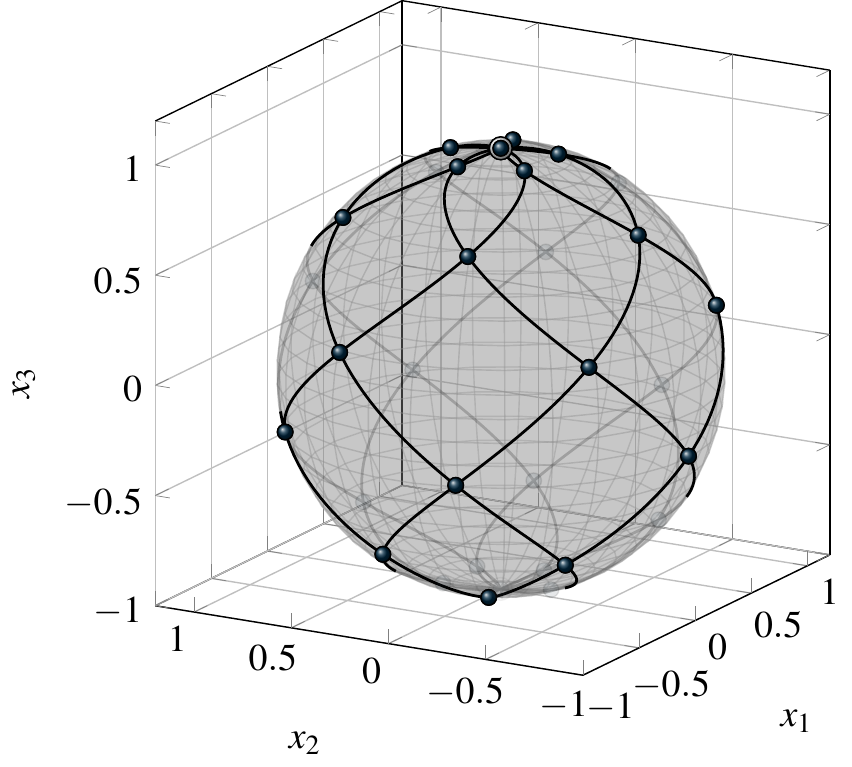}}
  	\caption{Illustration of two Lissajous curves and its intersection points described in Proposition \ref{prop-11}. The intersection
  	points of the curve are marked with black dots. The circled dots describe the two poles of $\Sd$.
  	} \label{fig:LS-1}
\end{figure}

In a first step, we want to derive some fundamental properties of the spherical Lissajous curve $\Lissajous$ defined in \eqref{201509161237}. In particular,
we are interested in its minimum period and in the number and type of its self-intersection points. Two examples of spherical
Lissajous curves with their intersection points are illustrated in Figure \ref{fig:LS-1}.

If the frequencies $m_1$ and $m_2$ of $\Lissajous$ are relatively prime, Proposition \ref{prop-11} below implies that the minimum period
of $\Lissajous$ is $2\pi$. In general, if $g = \mathrm{gcd}(\vect{m})$ denotes the greatest common divisor of $m_1$ and $m_2$,
then $\Lissajous$ can be rewritten as $\Lissajous(t) = \vect{\ell}^{(\vect{m}/g)}_{\alpha}( g t)$ and
the minimum period of $\Lissajous$ is $2 \pi/g$. For the description of spherical
Lissajous curves it is therefore enough to restrict ourselves for the moment to tuples $\vect{m}$ of relatively prime numbers. 

To extract the self-intersection points of the curve $\Lissajous$ we consider for $t \in [0,2\pi)$
the sets $\mathcal{A}^{(\vect{m})}(t) = \{ s \in [0,2\pi): \ \Lissajous(s) = \Lissajous(t) \}$ and the sampling points
\begin{align} t^{(\vect{m})}_{l} &= \frac{l \pi}{m_1 m_2}, \quad l \in \{0,1, \ldots, 2m_1m_2-1\}, \label{eq-samples1}\\
 t^{(\vect{m})}_{l+\frac12} &= \frac{(l + \frac12) \pi}{m_1 m_2}, \quad l \in \{0,1, \ldots, 2m_1m_2-1\}. \label{eq-samples2} \
\end{align}

\begin{proposition} \label{prop-11}Let $\mathrm{gcd}(\vect{m}) = 1$. If $m_2$ is even, then
 \[  \begin{array}{lll}
      (i) & \# \mathcal{A}^{(\vect{m})}(t) = m_2  & \text{if}\quad t \in \{\,t^{(\vect{m})}_{l}\,| \ l\in \{0,\ldots,2m_1m_2-1\}, \ l \equiv 0 \mod m_1 \},\\
      (ii) & \# \mathcal{A}^{(\vect{m})}(t) = 2 & \text{if}\quad t \in \{\,t^{(\vect{m})}_{l}\,| \ l\in \{0,\ldots,2m_1m_2-1\}, \ l \not\equiv 0 \mod m_1 \}, \\
      (iii) & \# \mathcal{A}^{(\vect{m})}(t) = 1 & \text{if}\quad t \in [0,2\pi) \setminus \{\,t^{(\vect{m})}_{l}\,|\, l\in \{0,\ldots,2m_1m_2-1\}\,\}.
       \end{array}
\]
If $m_2$ is odd, then
 \[  \begin{array}{lll}
      (i)' & \# \mathcal{A}^{(\vect{m})}(t) = m_2  & \text{if}\quad t \in \{\,t^{(\vect{m})}_{l}\,| \ l\in \{0,\ldots,2m_1m_2-1\}, \ l \equiv 0 \mod m_1 \}, \\
      (ii)' & \# \mathcal{A}^{(\vect{m})}(t) = 2 & \text{if}\quad t \in \{\,t^{(\vect{m})}_{l+\frac12}\,| \ l\in \{0,\ldots,2m_1m_2-1\} \}, \\
      (iii)' & \# \mathcal{A}^{(\vect{m})}(t) = 1 & \text{for all other $t \in [0,2\pi)$.}
      \end{array}
\]
\end{proposition}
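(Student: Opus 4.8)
The plan is to translate the vector equation $\Lissajous(s)=\Lissajous(t)$ into congruence conditions on $m_1 t$ and $m_2 t$ modulo $2\pi$, and then reduce the counting of $\mathcal{A}^{(\vect{m})}(t)$ to an intersection-of-cosets problem on the circle $\Rr/2\pi\Zz$. Writing the longitude as $\phi(t)=m_1 t-\alpha\pi$, the third coordinate forces $\cos(m_2 s)=\cos(m_2 t)$, i.e. $m_2 s\equiv\pm m_2 t\pmod{2\pi}$, so $\sin(m_2 s)=\pm\sin(m_2 t)$. I would first isolate the \emph{pole case} $\sin(m_2 t)=0$: there the first two coordinates vanish, the longitude is irrelevant, and $\Lissajous(s)=\Lissajous(t)$ reduces to the single condition $m_2 s\equiv m_2 t\pmod{2\pi}$, which has exactly $m_2$ solutions $s\in[0,2\pi)$. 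Since $\sin(m_2 t)=0$ is equivalent to $t=t^{(\vect{m})}_{l}$ with $l\equiv 0\bmod m_1$, this already yields cases $(i)$ and $(i)'$.

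Away from the poles ($\sin(m_2 t)\neq 0$) the equality splits into two mutually exclusive alternatives: $(A)$ $m_2 s\equiv m_2 t$ together with $m_1 s\equiv m_1 t\pmod{2\pi}$ (equal height, equal longitude), and $(B)$ $m_2 s\equiv -m_2 t$ together with $m_1 s\equiv m_1 t+\pi\pmod{2\pi}$ (reflected height, longitude shifted by $\pi$). Introducing the cyclic subgroups $H_1=(2\pi/m_1)\Zz$ and $H_2=(2\pi/m_2)\Zz$ of $\Rr/2\pi\Zz$, the hypothesis $\mathrm{gcd}(\vect{m})=1$ gives $H_1\cap H_2=\{0\}$ and $H_1+H_2=(2\pi/m_1m_2)\Zz$. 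The solution set of $(A)$ is then $(t+H_2)\cap(t+H_1)=t+(H_1\cap H_2)=\{t\}$, so $(A)$ contributes exactly the trivial solution $s=t$. The solution set of $(B)$ is $(-t+H_2)\cap\big((t+\pi/m_1)+H_1\big)$, a coset of $H_1\cap H_2=\{0\}$; hence $(B)$ contributes \emph{at most one} solution, and it is distinct from $s=t$ because the longitudes differ by $\pi$.

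It remains to decide when $(B)$ is nonempty. Two such cosets intersect iff their base-point difference $2t+\pi/m_1$ lies in $H_1+H_2=(2\pi/m_1m_2)\Zz$, which a short computation turns into the requirement $t=M\pi/(2m_1m_2)$ with $M\equiv m_2\bmod 2$. Here the parity of $m_2$ enters and produces the two halves of the statement: for $m_2$ even, $M$ is even and $t$ is an integer node $t^{(\vect{m})}_{l}$, while for $m_2$ odd, $M$ is odd and $t$ is a half-integer node $t^{(\vect{m})}_{l+\frac12}$. Collecting the contributions of $(A)$ and $(B)$ at the non-pole points gives $\#\mathcal{A}^{(\vect{m})}(t)=2$ exactly at these nodes (cases $(ii)$ and $(ii)'$) and $\#\mathcal{A}^{(\vect{m})}(t)=1$ everywhere else (cases $(iii)$ and $(iii)'$). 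I expect the main work to be bookkeeping: carefully separating the pole points, where the dichotomy degenerates and the count jumps to $m_2$, from the generic points, and translating the abstract solvability condition for $(B)$ into the precise index conditions on $l$ modulo $m_1$ and on the integer versus half-integer grids. The genuinely delicate point is the parity argument linking $M\equiv m_2\bmod 2$ to the two sampling families, which is exactly what forces the even/odd case distinction.
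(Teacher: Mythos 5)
Your proposal is correct and takes essentially the same route as the paper's proof: you separate the pole case, split the non-pole equality $\vect{\ell}^{(\vect{m})}_{\alpha}(s)=\vect{\ell}^{(\vect{m})}_{\alpha}(t)$ into the two alternatives $v=\pm 1$ arising from $\cos(m_2 s)=\cos(m_2 t)$, and use $\gcd(m_1,m_2)=1$ to obtain both the uniqueness of the nontrivial solution and the solvability condition, whose parity analysis (your $M\equiv m_2 \bmod 2$, the paper's $2m_1m_2t \eqsim m_2\pi$) produces the even/odd dichotomy. Your coset-intersection formulation with $H_1\cap H_2=\{0\}$ and $H_1+H_2=\bigl(2\pi/(m_1m_2)\bigr)\Zz$ is simply a group-theoretic repackaging of the paper's explicit B\'ezout computation, not a genuinely different argument.
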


\begin{remark}
If $m_1$ and $m_2$ are relatively prime we obtain as a consequence of Proposition \ref{prop-11} that the minimum period
of $\Lissajous$ is $2 \pi$. The points $\Lissajous(t)$ with $\# \mathcal{A}^{(\vect{m})}(t) = m_2$ correspond to the north or the south
pole of the sphere. Hence, in every period $[0,2\pi)$ the curve $\Lissajous$ traverses both poles $m_2$ times. All other points
$\Lissajous(t)$ with $\# \mathcal{A}^{(\vect{m})}(t) = 2$ correspond to non-polar double points of the curve on the sphere, i.e., they
are traversed twice by the curve as $t$ varies from $0$ to $2 \pi$. Depending on whether $m_1$ is odd or even, we get a different number of 
self-intersection points for $\Lissajous$. These numbers are summarized in Table \ref{tab:11}. 
\end{remark}

\begin{table}[htb] 
 \caption{Number and type of intersection points (IP's) for the curve $\Lissajous$ if $m_1$, $m_2$ are 
 relatively prime. For general $\vect{m}$, the corresponding numbers are obtained 
 by considering the curve $\vect{\ell}^{(\vect{m}/g)}_{\alpha}$ instead ($g = \mathrm{gcd}(\vect{m})$). } \label{tab:11} 
 \begin{center}
 \begin{tabular}[t]{lll} \hline \noalign{\smallskip}
  Curve $\Lissajous$         & Number of IP's& Type of IP's \\ \noalign{\smallskip}\hline \noalign{\smallskip}
  $m_2$ even   & $m_2 (m_1 -1) + 2$ & $\begin{array}{l} \text{2 poles, traversed $m_2$ times in one period,} \\ 
                                                      \text{$m_2 (m_1 -1)$ non-polar double points} \end{array}$ \\ \noalign{\medskip}
  $m_2>1$ odd  & $ m_1 m_2 + 2$ & $\begin{array}{l} \text{2 poles, traversed $m_2$ times in one period,} \\ 
                                                       \text{$m_1 m_2$ non-polar double points} \end{array}$ \\ \noalign{\medskip}
  $m_2=1$      & $ m_1 $ & $\begin{array}{l}           \text{$m_1$ non-polar double points} \end{array}$  \\ \hline 
  \end{tabular}    
  \end{center}
\end{table}

\begin{proof} We use the equivalence relation $s\eqsim t$ to denote that $t-s\in 2\pi\mathbb{Z}$. We consider first all $t \in [0,2\pi)$ such
that $\Lissajous(t)$ is one of the poles $(0,0,v)$, $v\in \{-1,1\}$, of the unit sphere. By the definition \eqref{201509161237}
of the Lissajous curve $\Lissajous$, the identity $\Lissajous(t)=(0,0, v)$
holds if and only if
\begin{equation*}
m_2 t \eqsim \frac{1-v}{2} \pi \quad \text{for $v\in \{-1,1\}$},
\end{equation*}
i.e., if and only if $t = t^{(\vect{m})}_{l}$ with $l\in \{0,m_1, 2m_1, \ldots ,(2m_2-1)m_1\}$. Further,
we have in this case $\Lissajous(t^{(\vect{m})}_{l})=(0,0, 1)$ if $l/m_1$ is even and
$\Lissajous(t^{(\vect{m})}_{l})=(0,0, -1)$ if $l/m_1$ is odd. This yields the statements (i) and (i)' of Proposition \ref{prop-11}.

We consider now the case that $\Lissajous(t)$ is not one of the poles.
By the definition \eqref{201509161237} of  the curve $\Lissajous(t)$, we have $\Lissajous(s) = \Lissajous(t)$ if and only if
\[\cos(m_2 s) = \cos(m_2 t) \quad \text{and} \quad \sin(m_2 s) \left( \begin{array}{l} \cos(m_1 s - \alpha \pi) \\ \sin(m_1 s - \alpha \pi) \end{array}  \right) = \sin(m_2 t)  
 \left( \begin{array}{l} \cos(m_1 t - \alpha \pi) \\ \sin(m_1 t - \alpha \pi) \end{array}  \right).\]
In the first formula we get equality precisely if $m_2 s  \eqsim v m_2 t$ for some $v \in \{-1,1\}$. Plugging this relation into the second formula, we see that
$\sin(m_2 s) = v \sin(m_2 t)$ and, thus, that we get equality in the second formula exactly if $m_1 s \eqsim m_1 t + \frac{1-v}{2} \pi$. In total, we can conclude
that $s\in \mathcal{A}(t)$ if and only if  
\begin{equation}\label{170210-2}
m_2( s - v t) \eqsim 0 \quad \text{and} \quad m_1 ( s - t) + \frac{1-v}{2} \pi \eqsim 0 \qquad \text{for $v\in \{-1,1\}$}
\end{equation}
is satisfied . Since $m_1$ and $m_2$ are relatively prime, B\'ezout's lemma gives two integers
$a, b\in\mathbb{Z}$ such that $a m_1 + b m_2 = 1$. The two conditions
in \eqref{170210-2} imply
\begin{equation} \label{170210}
s \eqsim t - b m_2 (1-v) t - a \frac{1-v}{2} \pi,
\end{equation}
and, thus, $s \eqsim t$ for $v = 1$. For $v = -1$, the two conditions in \eqref{170210-2} imply
\begin{align*} 2 m_1 m_2 s\eqsim  m_1 m_2 (s + t) + m_2 m_1 ( s - t) \eqsim m_2 \pi, \\
2 m_1 m_2 t \eqsim  m_1 m_2 (s + t) - m_2 m_1 ( s - t)  \eqsim m_2 \pi. \end{align*}
Thus, if $m_2$ is even, we have $s = t_{l'}^{(\vect{m})}$ and $t = t_{l}^{(\vect{m})}$ for some $l,l' \in \Zz$. On the other hand, if $m_2$ is odd,
we obtain $s = t_{l'+\frac12}^{(\vect{m})}$ and $t = t_{l+\frac12}^{(\vect{m})}$.

If $m_2$ is even, we can conclude the following: If $t \in [0, 2\pi)$ and $t \neq t_{l}^{(\vect{m})}$ for some $l \in \{0, \ldots, 2 m_1 m_2 -1\}$, then
$t$ is the only element of $[0,2 \pi)$ in $\mathcal{A}^{(\vect{m})}(t)$ and the statement (iii) of the proposition is proven. If
$t \in [0, 2\pi)$ and $t = t_{l}^{(\vect{m})}$, $l\in \{0,\ldots,2m_1m_2-1\}$ and $l \not\equiv 0 \mod m_1$, then $s \in [0,2\pi)$ given
by \eqref{170210} with $v = -1$ satisfies both conditions in \eqref{170210-2} for $v = -1$. Further, because of
the second condition in \eqref{170210-2}, $s \not\eqsim t$. Note that
the particular choice of the numbers $a$ and $b$ from B\'ezout's lemma does not influence equation \eqref{170210} so that if $\vect{m}$ is fixed
and $v = -1$, then $s$ is uniquely determined by $t$. Since $t=t_{l}^{(\vect{m})}$, the so constructed
$s$ can also be written as $s = t_{l'}^{(\vect{m})}$ with some
$l'\in \{0,\ldots,2m_1m_2-1\}$, $l' \not\equiv 0 \mod m_1$ and $l' \neq l$. In total, we can conclude that $\mathcal{A}^{(\vect{m})}(t) = \{s,t\}$
and, thus, the statement (ii) of the proposition. If $m_2$ is odd, we obtain statements (ii)' and (iii)' in an analogous way. \qed
\end{proof}

From the findings in Proposition \ref{prop-11} we see that an even frequency number $m_2$ leads to a slightly different setup of
intersection points than an odd $m_2$. In this article, we will focus on the case that $m_2$ is an even number. In this
case the nodes
\begin{equation} \label{1709171731}
 \LSm_{\alpha} = \left\{\,\Lissajous(t^{(\vect{m})}_{l})\,|\,  l\in \{0,\ldots,2m_1m_2-1\} \,\right\}.
\end{equation}
contain the two poles of the sphere and give a simple characterization of all self-intersection points of the Lissajous curve $\Lissajous$.

\begin{corollary} \label{cor-1}
Let $\mathrm{gcd}(\vect{m}) = 1$ and $m_2$ even. Then, $\LSm_\alpha$ is the set of all self-intersection points of the closed curve
$\Lissajous(t)$, $t \in [0,2\pi)$. $\LSm_\alpha$ contains
$m_2 (m_1 -1) + 2$ points on the sphere $\Sd$, including both poles that are traversed $m_2$ times, and $m_2(m_1-1)$ non-polar double points that
are traversed $2$ times by the curve $\Lissajous(t)$ as $t$ varies from $0$ to $2\pi$.
\end{corollary}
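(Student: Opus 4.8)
The plan is to obtain the corollary as a direct bookkeeping consequence of Proposition~\ref{prop-11} in the case of even $m_2$. A point $\Lissajous(t)$ is a self-intersection of the closed curve exactly when $\#\mathcal{A}^{(\vect{m})}(t)\ge 2$, and by the cases (i)--(iii) this occurs precisely for the sampling arguments $t=t^{(\vect{m})}_{l}$, $l\in\{0,\ldots,2m_1m_2-1\}$, and for no other $t\in[0,2\pi)$. Since $\LSm_\alpha$ is by definition \eqref{1709171731} the image of exactly these sampling arguments, it coincides with the set of self-intersection points, which proves the first assertion.

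For the cardinality I would count distinct points through the fibres $\{l:\Lissajous(t^{(\vect{m})}_l)=p\}$ over the points $p\in\LSm_\alpha$. The key observation, which requires only a short verification, is that the full preimage in $[0,2\pi)$ of any intersection point already lies on the sampling grid, so that the size of such a fibre equals $\#\mathcal{A}^{(\vect{m})}(t)$ for a representative $t$. For a pole this is immediate, since $\Lissajous(t)=(0,0,\pm1)$ forces $m_2 t\in\pi\Zz$, hence $t=t^{(\vect{m})}_{l}$ with $l$ a multiple of $m_1$; for a non-polar double point the proof of Proposition~\ref{prop-11} exhibits both preimages as sampling arguments with index $\not\equiv 0\bmod m_1$.

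Granting this, every index $l\in\{0,\ldots,2m_1m_2-1\}$ belongs to exactly one fibre, so summing the fibre sizes yields $2m_1m_2 = 2\,m_2 + 2\,N$, where the two poles contribute $\#\mathcal{A}^{(\vect{m})}=m_2$ each by case (i) and where $N$ denotes the number of non-polar double points, each contributing $\#\mathcal{A}^{(\vect{m})}=2$ by case (ii). This gives $N=m_2(m_1-1)$. The pole identification from Proposition~\ref{prop-11}---north pole for $l/m_1$ even, south pole for $l/m_1$ odd---shows that both poles indeed occur and are each traversed $m_2$ times, while the $2m_2(m_1-1)$ indices with $l\not\equiv 0\bmod m_1$ distribute in pairs over the $m_2(m_1-1)$ double points. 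Altogether $\LSm_\alpha$ has $m_2(m_1-1)+2$ elements.

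Since all the quantitative information is already encoded in Proposition~\ref{prop-11}, I expect no real obstacle; the proof is purely a counting argument. The only step deserving care is the fibre observation above: without knowing that no intersection point has a preimage off the sampling grid, one could not identify the fibre sizes with the numbers $m_2$ and $2$ supplied by Proposition~\ref{prop-11}, and the division by these multiplicities would be unjustified.
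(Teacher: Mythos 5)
Your proposal is correct and follows essentially the same route as the paper, which states the corollary as an immediate consequence of Proposition~\ref{prop-11} and carries out the identical bookkeeping in the subsequent remark and Table~\ref{tab:11}: the $2m_1m_2$ sampling parameters split into $2m_2$ pole passages (each pole hit $m_2$ times) and $2m_2(m_1-1)$ parameters pairing into $m_2(m_1-1)$ double points. Your one added care point---that case (iii) guarantees every preimage of an intersection point lies on the sampling grid, so fibre sizes equal $\#\mathcal{A}^{(\vect{m})}(t)$---is exactly the observation the paper leaves implicit, and it is verified correctly.
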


\section{Characterizing spherical Lissajous nodes} \label{sec:nodesphere}

In addition to the description given in Corollary \ref{cor-1}, we can characterize the intersection points of the Lissajous curves also as
the union of two interlacing rectangular grids in spherical coordinates. The construction for this second characterization can be performed for general frequencies
$\vect{m} = (m_1, m_2) \in\Nn^{2}$ where $m_2$ is even. If $m_1$ and $m_2$ are not relatively prime the so obtained nodes can also be interpreted in terms
of Lissajous curves. This relation will be discussed at the end of this section.

\begin{figure}[htb]
	\centering
	\subfigure[\hspace*{1em} The index sets $\I^{(7,6)}$, $\I^{(7,6)}_0$, $\I^{(7,6)}_1$ and $\I^{(7,6)}_{\mathrm{S}}$.
	]{\includegraphics[scale=0.8]{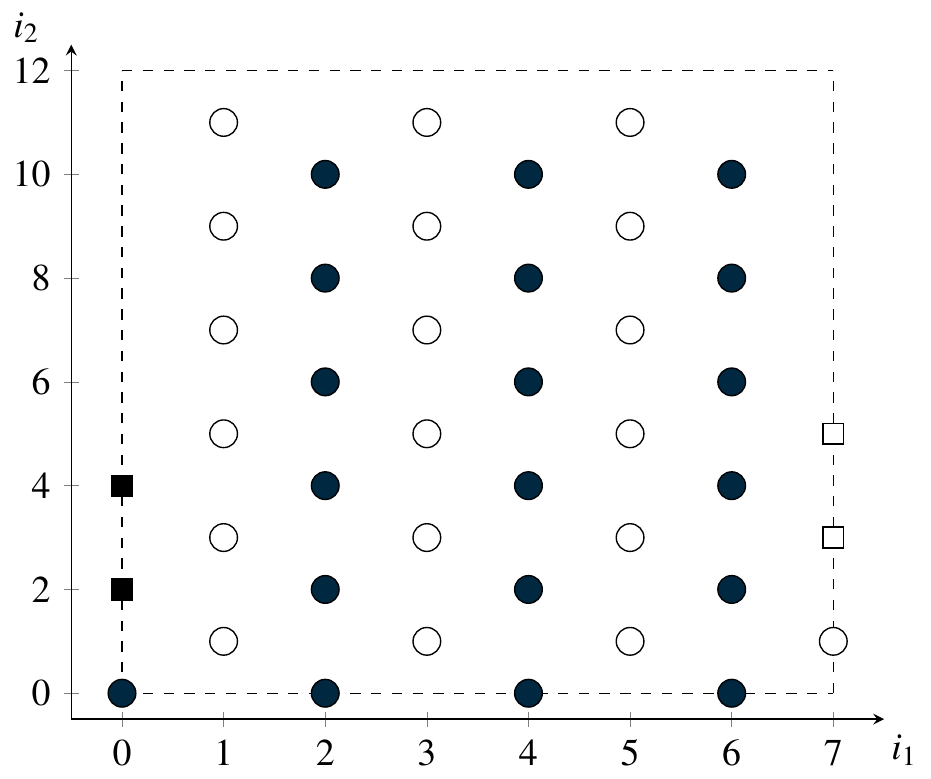}}
	\hfill	
	\subfigure[\hspace*{1em}The index sets $\I^{(6,6)}$, $\I^{(6,6)}_0$, $\I^{(6,6)}_1$ and $\I^{(6,6)}_{\mathrm{S}}$.
	]{\includegraphics[scale=0.8]{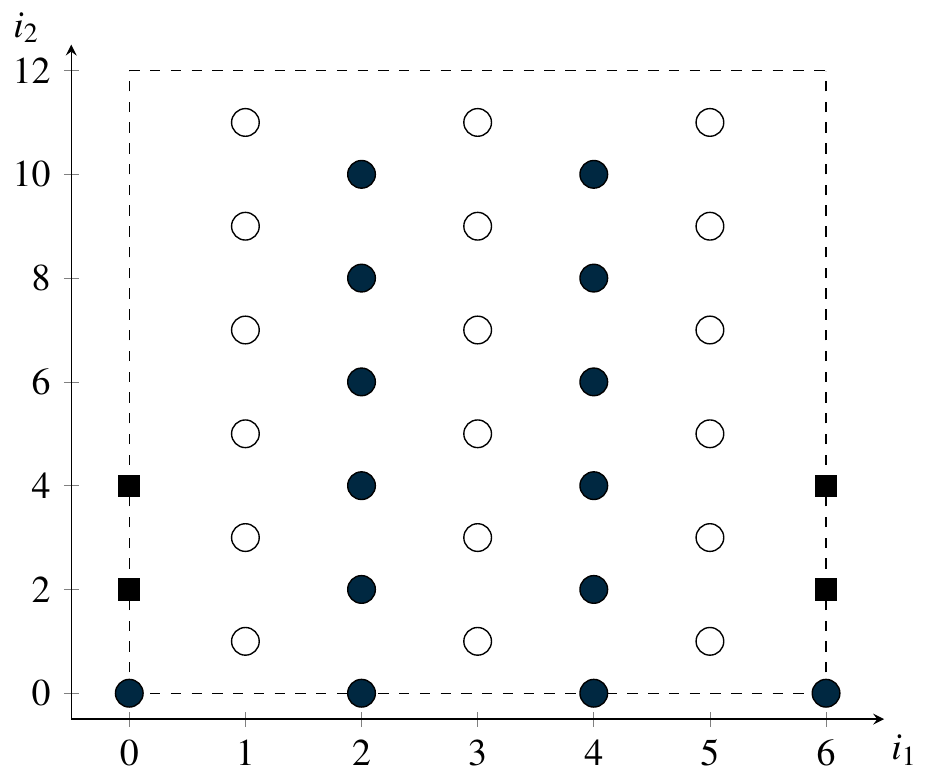}}
  	\caption{Illustration of the index sets $\Immp$, $\Immp_0$ and $\Immp_1$, as well as $\Imms$ as defined in \eqref{eq:0911}, \eqref{eq:0901}, and \eqref{eq:Imms}, 
  	respectively. The black marks describe $\Immp_0$, the white marks $\Immp_1$, the union gives $\Immp$.
  	According to \eqref{eq:09172}, the indices $\vect{i} \in \Immp$ with $i_1 = 0$ and $i_1 = m_1$ describe the north and south pole of $\Sd$, respectively. 
  	For a one to one relation at the poles we consider the reduced subset $\Imms \subset \Immp$. The square marks on the left and right boundary describe the indices 
  	not contained in $\Imms$.
  	} \label{fig:LS-Index}
\end{figure}

To describe the spherical Lissajous nodes we introduce the index set
\begin{equation}\label{eq:0911}
\Immp = \left\{ \,(i_1, i_2) \in \Nn_0^{2}\ \left|\begin {array}{ll}
& 0\leq i_{1}\leq m_{1}, \; 0 \leq i_2 < 2 m_2, \\
& \text{$i_2 < m_2$ \, if $i_{1} \in \{0,m_1\}$}, \\
& \text{$i_{1} + i_{2}$ is even}
\end{array}\right. \, \right\}.
\end{equation}
The set $\Immp$ is a disjoint union $\Immp = \Immp_0 \cup \Immp_1$ of the two sets 
\begin{equation} \label{eq:0901} 
\Immp_0 = \{\vect{i} \in \Immp \ | \ \text{$i_1$, $i_2$ are even} \, \}, \quad
\Immp_1 = \{\vect{i} \in \Immp \ | \ \text{$i_1$, $i_2$ are odd} \, \}.
\end{equation} 

For $\vect{i} = (i_1, i_2) \in \Imm$ we obtain a relation to spherical coordinates by introducing the latitudinal and longitudinal angles
\[ \theta^{(m_1)}_{i_1} = \frac{i_1}{m_1} \pi \in [0,\pi], \qquad \vph^{(m_2)}_{i_2} = \frac{i_2}{m_2} \pi \in [0,2\pi).\]
The set of nodes on the sphere $\Sd$ corresponding to these spherical coordinates is given by
\begin{equation} \label{eq:09172}
\LSm =\left\{\, \vect{x}^{(\vect{m})}_{\vect{i}}\,\left|\,\vect{i}\in \Immp \right.\right\},
\end{equation}
with the points $\vect{x}^{(\vect{m})}_{\vect{i}} \in \Sd$ defined by
\[\vect{x}^{(\vect{m})}_{\vect{i}} = \left(\sin (\theta^{(m_1)}_{i_1}) \cos (\vph^{(m_2)}_{i_2}), \sin (\theta^{(m_1)}_{i_1}) \sin (\vph^{(m_2)}_{i_2}), \cos (\theta^{(m_1)}_{i_1})\right).\]
The cardinality of the set $\Immp$ in \eqref{eq:0911} can be determined from the simple structure of the sets
$\Imm_0$ and $\Imm_1$ in \eqref{eq:0901} (see also Figure \ref{fig:LS-Index}). We have
\begin{equation*}
\#\Immp_{0} = m_1 m_2 / 2,  \quad 
\#\Immp_{1} = m_1 m_2 / 2,
\end{equation*}
and thus 
\[\#\Immp =\#\Immp_0 +\#\Immp_1 = m_1 m_2. \]
All points $\vect{x}^{(\vect{m})}_{\vect{i}}$ with $i_1 = 0$ describe the north pole of $\Sd$ and all
$\vect{x}^{(\vect{m})}_{\vect{i}}$ with $i_1 = m_1$ the south pole. Therefore, the cardinality
of $\LSm$ is smaller than $\#\Imm$. A simple counting gives $\#\LSm = (m_1 -1) m_2 +2$.
In order to have a one to one correspondence between indices and elements in $\LSm$ we introduce the following subsets of $\Imm$:
\begin{equation} \label{eq:Imms}
 \Imms = \{\vect{i} \in \Immp \ | \ i_2 \in \{0,1\} \ \text{if} \ i_1 \in \{0, m_1\}\}. 
\end{equation}
Clearly $\Imms \subset \Immp $ and  
$\# \Imms = \# \LSm = (m_1 -1) m_2 +2$.
Moreover, every node in $\LSm$ can now be
described in a unique way by an index $\vect{i} \in \Imms$. In particular, we have
\[\LSm =\left\{\, \vect{x}^{(\vect{m})}_{\vect{i}}\,\left|\,\vect{i}\in \Imms \right.\right\}.\]

As a basis for the interpolation on the sphere, we will use a double Fourier basis that is not continuous at the poles of the sphere. It makes therefore sense 
to formulate the interpolation theory first in terms of the larger index set $\Immp$. In a second step, we will then reduce the problem to the subset $\Imms$ and the corresponding Lissajous node points $\LSm$. The reason for the halved number of elements at the left and right boundary in $\Imm$ is a glide reflection symmetry of
the used Fourier basis. This symmetry will play an important role when we discuss the implementation of the interpolation scheme.

The following more technical result provides an identification of the index set $\Immp$
with a class decomposition of the product set $H^{(\vect{m})}\times {R}^{(\vect{m})}$, where the sets $H^{(\vect{m})}$ and ${R}^{(\vect{m})}$ are given as
\[ H^{(\vect{m})}=\{0,\ldots, 2 m_{1} m_{2}/g -1\} \qquad
\text{and}\qquad
R^{(\vect{m})}= \{0,\ldots,g-1\}.\]
Here, $g = \mathrm{gcd}(\vect{m})$ denotes again the greatest common divisor of the integers $m_1$ and $m_2$. This result will provide us the 
link between the nodes $\LSm$ and the involved generating Lissajous curves.

\begin{proposition}\label{1509221521} Let $\vect{m} \in \Nn^2$ and $m_2$ be even. \vspace{-1mm}
\begin{itemize}
\item[a)] For all\, $(l,\rho)\in H^{(\vect{m})}\times R^{(\vect{m})}$, there exists an $\vect{i}\in \Immp$ and a $v \in\{-1,1\}$ such that
\begin{align}
i_{1} &\equiv v l \mod 2 m_1, \label{1509221526} \\
i_{2} &\equiv l - 2\rho \frac{m_2}{g} - \frac{1-v}{2} m_2 \mod 2 m_2. \label{1509221526B}
\end{align}
The number $v \in\{-1,1\}$ and the element $\vect{i}\in \Immp$ are uniquely determined by \eqref{1509221526} and \eqref{1509221526B}.
In this way, a function $\vect{i}^{(\vect{m})}:\,H^{(\vect{m})}\times R^{(\vect{m})}\to\Immp$ is well defined by
\begin{equation*}
\vect{i}^{(\vect{m})}(l,\rho)=\vect{i}.
\end{equation*} \vspace{-8mm}
\item[b)] $\vect{i}^{(\vect{m})}(l,\rho)\in \Imm_{0}$ if and only if $l$ is even, and $\vect{i}^{(\vect{m})}(l,\rho)\in \Imm_{1}$ 
if and only if $l$ is odd.
\item[c)] For all
$\vect{i}\in \Immp$ we have $\#\{\,(l,\rho)\in H^{(\vect{m})}
\times R^{(\vect{m})}\,|\,\vect{i}^{(\vect{m})}(l,\rho)=\vect{i}\,\} = 2$.
\end{itemize}
\end{proposition}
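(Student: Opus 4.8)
The plan is to treat the three parts in sequence, reusing the B\'ezout-type reasoning from the proof of Proposition~\ref{prop-11}. For part a) I would determine $\vect{i}$ and $v$ in two stages. For the first coordinate, observe that $\{0,1,\dots,m_1\}$ is a fundamental domain for the involution $x\mapsto -x$ on $\Zz/2m_1\Zz$: every residue class mod $2m_1$ has exactly one representative $i_1\in\{0,\dots,m_1\}$ once negation is allowed. Hence \eqref{1509221526} fixes $i_1$ uniquely as the element of $\{0,\dots,m_1\}$ with $i_1\equiv\pm l\pmod{2m_1}$, and it pins the sign $v$ uniquely precisely when $i_1\notin\{0,m_1\}$, while both signs stay admissible at the poles $i_1\in\{0,m_1\}$. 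For the second coordinate, each admissible sign turns \eqref{1509221526B} into a congruence fixing $i_2$ as the unique residue in $\{0,\dots,2m_2-1\}$. For interior $i_1$ this already yields a single $\vect{i}\in\Immp$; at a pole the two signs give two candidate values of $i_2$ differing by $m_2$ modulo $2m_2$, so exactly one lies in $\{0,\dots,m_2-1\}$, and the constraint ``$i_2<m_2$ if $i_1\in\{0,m_1\}$'' from \eqref{eq:0911} selects that sign and that $i_2$. This gives existence and uniqueness of $(\vect{i},v)$, hence well-definedness of $\vect{i}^{(\vect{m})}$.

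It remains to verify $\vect{i}\in\Immp$, i.e.\ that $i_1+i_2$ is even. Since $v\in\{-1,1\}$, \eqref{1509221526} gives $i_1\equiv l\pmod 2$; and since $m_2$ is even, both $2\rho\,m_2/g$ and $\tfrac{1-v}{2}m_2$ are even, so \eqref{1509221526B} gives $i_2\equiv l\pmod 2$. Thus $i_1+i_2\equiv 2l\equiv 0\pmod 2$. The same computation $i_1\equiv i_2\equiv l\pmod 2$ is exactly part b): $\vect{i}^{(\vect{m})}(l,\rho)\in\Immp_0$ when $l$ is even and $\in\Immp_1$ when $l$ is odd.

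For part c) I would argue by counting. By a) the fibers of $\vect{i}^{(\vect{m})}$ partition $H^{(\vect{m})}\times R^{(\vect{m})}$, and $\#(H^{(\vect{m})}\times R^{(\vect{m})})=(2m_1m_2/g)\cdot g=2m_1m_2=2\,\#\Immp$. It therefore suffices to show every fiber has at least two elements; the cardinality identity then forces exactly two. To produce preimages of a fixed $\vect{i}$, for each admissible sign $v$ I would solve \eqref{1509221526}--\eqref{1509221526B} for $(l,\rho)\in H^{(\vect{m})}\times R^{(\vect{m})}$: equation \eqref{1509221526} fixes $l$ modulo $2m_1$, equation \eqref{1509221526B} is solvable for a then-unique $\rho\in R^{(\vect{m})}$ exactly when $l$ satisfies a further congruence modulo $2m_2/g$, and B\'ezout's lemma applied to $\gcd(m_1,m_2)=g$ reassembles these constraints into solutions within the prescribed index ranges.

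The main obstacle is the bookkeeping of this last step, and especially its polar case. For interior $\vect{i}$ the two signs are genuinely distinct because $i_1\not\equiv -i_1\pmod{2m_1}$ when $1\le i_1\le m_1-1$, and one must check that the solution counts over $v=\pm1$ add up to exactly two. At a pole the signs no longer separate preimages, and instead the $m_2$ pairs $(l,\rho)$ with $l\equiv 0\pmod{2m_1}$ must be shown to distribute two-to-one over the $m_2/2$ polar indices with $i_2<m_2$; this reduces to verifying that an explicit affine residue map into $\Zz/(m_2/2)\Zz$ is exactly two-to-one. This is the delicate arithmetic point where the precise interplay of $m_1$, $m_2$ and $g=\mathrm{gcd}(\vect{m})$, together with the evenness of $m_2$, is essential, and it is the combinatorial incarnation of the fact (Proposition~\ref{prop-11}) that each non-polar node is a genuine double point traversed twice by the generating curve. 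Everything else is routine verification.
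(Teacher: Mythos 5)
Your treatment of parts a) and b) is correct and is essentially the paper's own argument: $\{0,\dots,m_1\}$ as a fundamental domain for negation modulo $2m_1$ determines $i_1$ and pins $v$ except when $l\equiv 0$ or $m_1 \bmod 2m_1$; at the poles the two candidate values of $i_2$ differ by $m_2$ modulo $2m_2$, so the constraint $i_2<m_2$ in \eqref{eq:0911} selects the sign; and $i_1\equiv l\equiv i_2 \bmod 2$ (using that $m_2$ is even) gives membership in $\Immp$ and part b). Your counting reduction for c) --- $\#\bigl(H^{(\vect{m})}\times R^{(\vect{m})}\bigr)=2m_1m_2=2\,\#\Immp$, so a lower bound of two per fiber forces exactly two --- is sound and would even tidy the paper's ending.

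The genuine gap is that the lower bound, which is the entire content of c), is never proved: you defer it to ``B\'ezout's lemma applied to $\gcd(m_1,m_2)=g$ reassembles these constraints,'' and that is the wrong arithmetic ingredient. Your own reduction is right up to that point: \eqref{1509221526} fixes $l$ modulo $2m_1$, and \eqref{1509221526B} is solvable for a unique $\rho\in R^{(\vect{m})}$ exactly when $l\equiv i_2+\tfrac{1-v}{2}m_2 \bmod 2m_2/g$. But merging the two congruences on $l$ is a Chinese-remainder problem for the \emph{non-coprime} moduli $2m_1$ and $2m_2/g$: the obstruction lives modulo $\gcd(2m_1,2m_2/g)=2\gcd(m_1,m_2/g)$, and the parity compatibility $vi_1\equiv i_2+\tfrac{1-v}{2}m_2 \bmod 2$ disposes of it, with uniqueness of $l$ in $H^{(\vect{m})}$, only when $\gcd(m_1,m_2/g)=1$. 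A B\'ezout identity $am_1+bm_2=g$ says nothing about this. This is exactly how the paper proves c): fix $v$, set $a_1=vi_1$ and $a_2=i_2+m_2(1-v)/2$, note $a_1\equiv a_2\bmod 2$, invoke the Chinese remainder theorem with the coprimality of $m_1$ and $m_2/g$ to get a unique $l$, then the unique $\rho$, and observe the two signs yield distinct pairs --- no separate polar case is needed, whereas your polar ``two-to-one affine residue map'' claim is likewise asserted rather than proven.

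Moreover, the step you dismiss as routine is precisely where the argument can fail, so it cannot be discharged by bookkeeping in the stated generality. Take $\vect{m}=(6,4)$, so $g=2$ and $\gcd(m_1,m_2/g)=2$, and $\vect{i}=(2,0)\in\Immp$: condition \eqref{1509221526} forces $l\equiv\pm2\bmod 12$, hence $l\equiv 2\bmod 4$, while \eqref{1509221526B} with $i_2=0$ forces, for either sign $v$, $l\equiv 4\rho+\tfrac{1-v}{2}\,4\equiv 0\bmod 4$; the fiber over $(2,0)$ is therefore empty, and correspondingly one checks that the fiber over $(2,2)$ contains four pairs. So ``every fiber has at least two elements'' is unprovable from the congruences as printed unless $\gcd(m_1,m_2/g)=1$ --- a coprimality that the paper's proof also asserts (it holds whenever $\gcd(\vect{m})=1$ or $m_2\mid m_1$, covering all of the paper's examples, but not for all even $m_2$, and note that the proof of Corollary \ref{cor-2} tacitly works with the offset $2\rho$ in place of $2\rho\,m_2/g$ in \eqref{1509221526B}). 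Your proposal correctly senses that this is ``the delicate arithmetic point,'' but a proof must actually resolve it, and the tool you name cannot.
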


\begin{proof}
We start with statement a). For $l \in H^{(\vect{m})}$ we can find an integer $0\leq i_{1}\leq m_{1}$ and
a $v \in \{-1,1\}$ such that \eqref{1509221526} is satisfied. Clearly, the number $i_1$ is uniquely determined by this condition
whereas $v \in \{-1,1\}$ is only uniquely determined if $l \not\equiv 0 \mod 2 m_1$ and $l \not\equiv m_1 \mod 2 m_1$.
Further, for $(l,\rho)$ and $v \in \{-1,1\}$ given by \eqref{1509221526}, equation \eqref{1509221526B} gives a uniquely determined integer
$0\leq i_{2}< 2m_{2}$ in the case that $l$ is not divisible by $m_1$ or $2 m_1$. In the case that $l \equiv 0 \mod 2 m_1$ or $l \equiv m_1 \mod 2 m_1$,
condition \eqref{1509221526B} provides a unique integer $0\leq i_{2}< m_{2}$ by determining at the same time the value of $v \in \{-1,1\}$.
Since $m_2$ is even, we have $i_{1} \equiv l\equiv  i_{2} \mod 2$.
This implies $\vect{i}\in \Immp$ and, therefore, statement a). Statement b) follows also directly from \eqref{1509221526}, \eqref{1509221526B}
and the definition in \eqref{eq:0901}.

We finally turn to statement c). If $\vect{i}\in\Immp$ and $v\in \{-1,1\}$, then the integers
$a_{1}=v i_1$ and $a_2 = i_2 + m_2 (1-v)/2$ are uniquely determined by $\vect{i}\in\Immp$, $v\in \{-1,1\}$ and
satisfy $a_1 \equiv a_2 \mod 2$. Since $m_1$ and $m_2/g$ are relatively prime the Chinese remainder theorem 
yields a unique number $l\in \{0,\ldots,2m_1m_2/g-1\}$ such that
\[
 v i_{1} \equiv l \mod 2m_1, \qquad i_{2} + m_2 (1-v)/2 \equiv l \mod 2m_2/g.
\]
Now, we can find also a uniquely determined $\rho\in {R}^{(\vect{m})}$ such that \eqref{1509221526B} holds. Thus, since both choices of $v$ give
distinct elements $(l,\rho)$, statement c) is shown. \qed
\end{proof}

\begin{figure}[htb]
	\centering
	\subfigure[The Lissajous nodes $\LS^{(6,6)}$ and the union $ \bigcup_{\rho = 0}^{5 }\vect{\ell}^{(6,6)}_{\rho/3}$.
	The red curve describes $\ell^{(6,6)}_{2/3}$.
	]{\includegraphics[scale=0.9]{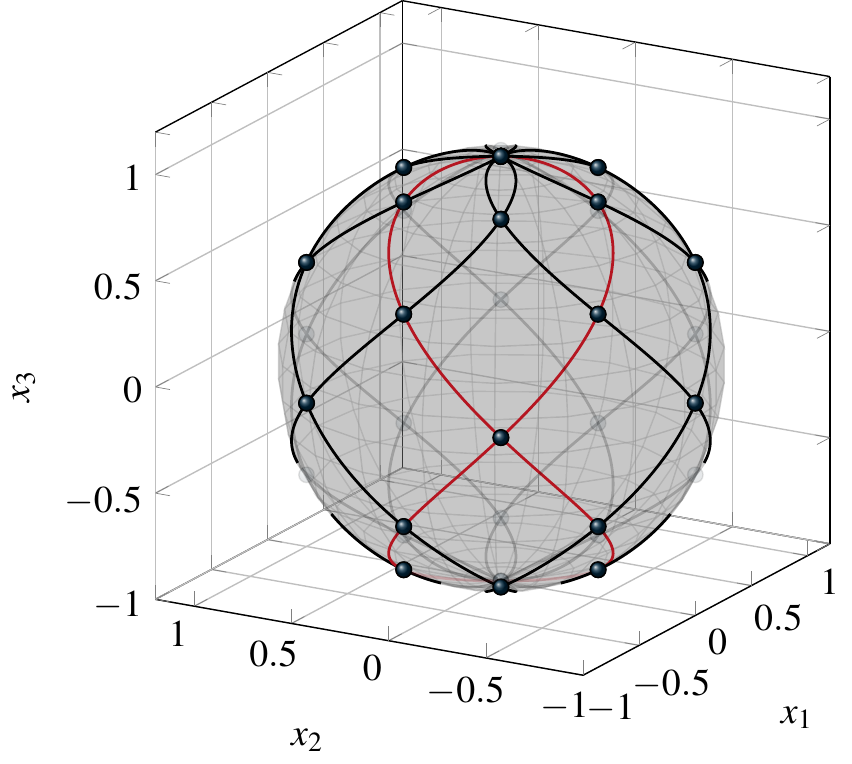}}
	\hfill	
	\subfigure[The Lissajous nodes $\LS^{(4,4)}$ and the union $ \bigcup_{\rho = 0}^{3 }\vect{\ell}^{(4,4)}_{\rho/2}$.
	The red curve describes $\ell^{(4,4)}_{1/2}$.
	]{\includegraphics[scale=0.9]{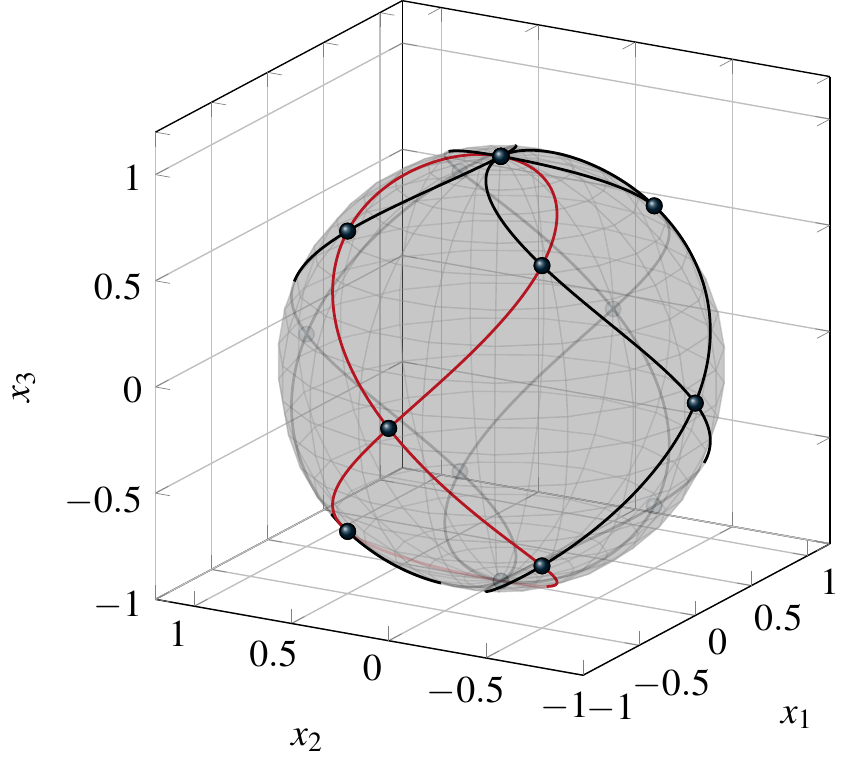}}
  	\caption{Illustration of the nodes $\LS^{(\vect{m})}$ and the union of their generating curves for $m_1 = m_2$.
  	} \label{fig:LS-2}
\end{figure}

A simple consequence of this proposition is the following description of the nodes $\LSm$. 

\begin{corollary} \label{cor-2}
Let $\vect{m} \in \Nn^2$ and $m_2$ even. Then
\[\LSm = \bigcup_{\rho = 0}^{g-1} \left\{\,\vect{\ell}^{(\vect{m})}_{2 \rho/m_2} (t^{(\vect{m})}_{l})\ | \  l\in \{0,\ldots,2m_1m_2/g-1\} \,\right\},\]
where $t^{(\vect{m})}_{l}$ are the equidistant sampling points given in \eqref{eq-samples1} and $\Lissajous$ the Lissajous curves introduced in \eqref{201509161237}.
In particular, if $m_1$ and $m_2$ are relatively prime, then $\LSm$ corresponds to the self-intersection points $\LSm_0$ of the curve
$\vect{\ell}^{(\vect{m})}_{0}$ given in \eqref{1709171731}. 
\end{corollary}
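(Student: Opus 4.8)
The plan is to reduce the set equality to a single pointwise identity and then invoke the counting in Proposition~\ref{1509221521}. Concretely, I would show that for every pair $(l,\rho)\in H^{(\vect{m})}\times R^{(\vect{m})}$ the sampled point of the $\rho$-th generating curve coincides with the node carried by the index function of that proposition,
\[
\vect{\ell}^{(\vect{m})}_{2\rho/m_2}\big(t^{(\vect{m})}_{l}\big)=\vect{x}^{(\vect{m})}_{\vect{i}^{(\vect{m})}(l,\rho)} .
\]
Granting this, the corollary is immediate: the pairs $(l,\rho)$ in the union on the right-hand side range exactly over $H^{(\vect{m})}\times R^{(\vect{m})}$, and by part~c) of Proposition~\ref{1509221521} every $\vect{i}\in\Immp$ is attained (with multiplicity two) by $\vect{i}^{(\vect{m})}$, so $\vect{i}^{(\vect{m})}$ maps onto $\Immp$. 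Hence the union equals $\{\vect{x}^{(\vect{m})}_{\vect{i}}:\vect{i}\in\Immp\}$, which is $\LSm$ by \eqref{eq:09172}.

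The substance is therefore the pointwise identity, which I would check by comparing spherical coordinates. Evaluating the curve \eqref{201509161237} at $t^{(\vect{m})}_{l}=l\pi/(m_1m_2)$ produces colatitude argument $m_2 t^{(\vect{m})}_{l}=l\pi/m_1$ and longitude argument $m_1 t^{(\vect{m})}_{l}-\alpha\pi=l\pi/m_2-\alpha\pi$; the rotation parameter $\alpha$ of the $\rho$-th curve is precisely the one for which the phase $-\alpha\pi$ realizes the longitudinal shift $-2\rho\,(m_2/g)$ occurring in \eqref{1509221526B}. Writing $\vect{i}=\vect{i}^{(\vect{m})}(l,\rho)$ and using the congruence \eqref{1509221526}, $i_1\equiv v l \bmod 2m_1$, the third coordinates agree at once because cosine is even, $\cos\theta^{(m_1)}_{i_1}=\cos(l\pi/m_1)$, while $\sin\theta^{(m_1)}_{i_1}=v\sin(l\pi/m_1)$ because sine is odd; and congruence \eqref{1509221526B} gives $\vph^{(m_2)}_{i_2}\equiv (l\pi/m_2-\alpha\pi)-\tfrac{1-v}{2}\pi \pmod{2\pi}$.

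It then remains to treat the two values of $v$ separately. For $v=1$ the three coordinates match term by term. For $v=-1$ the sign in $\sin\theta^{(m_1)}_{i_1}=-\sin(l\pi/m_1)$ is exactly compensated by the extra half-turn $-\tfrac{1-v}{2}\pi=-\pi$ in the longitude, since $\cos(\cdot-\pi)=-\cos(\cdot)$ and $\sin(\cdot-\pi)=-\sin(\cdot)$: the two minus signs cancel in each of the first two coordinates and the point is unchanged. This compensation in the case $v=-1$ is the only delicate point, and I expect it to be the main obstacle; conceptually it reflects that the curve can reach a given node either while increasing or while decreasing its colatitude, and the construction of $\vect{i}^{(\vect{m})}$ in Proposition~\ref{1509221521} is exactly what folds the unbounded latitude $m_2 t$ back into the range $[0,\pi]$ of $\theta^{(m_1)}_{i_1}$ while simultaneously correcting the longitude through the factor $v$. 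Finally, in the relatively prime case $\mathrm{gcd}(\vect{m})=1$ we have $g=1$, so $R^{(\vect{m})}=\{0\}$ and the union degenerates to the single curve $\vect{\ell}^{(\vect{m})}_{0}$ sampled at $t^{(\vect{m})}_{l}$, $l\in\{0,\dots,2m_1m_2-1\}$; comparing with \eqref{1709171731} identifies this set with the self-intersection points $\LSm_0$ of $\vect{\ell}^{(\vect{m})}_{0}$.
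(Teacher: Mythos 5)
Your proposal is correct and is essentially the paper's own proof: evaluate $\vect{\ell}^{(\vect{m})}_{2\rho/m_2}$ at $t^{(\vect{m})}_{l}$, fold the sample onto a node $\vect{x}^{(\vect{m})}_{\vect{i}}$ via the congruences \eqref{1509221526}--\eqref{1509221526B} (your explicit $v=-1$ half-turn compensation, with $\cos(\cdot-\pi)=-\cos(\cdot)$ and $\sin(\cdot-\pi)=-\sin(\cdot)$ absorbing the sign of $\sin\theta^{(m_1)}_{i_1}$, is exactly the step the paper performs implicitly in its two-line display), and invoke Proposition~\ref{1509221521}\,c) for surjectivity onto $\Immp$. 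One blemish you inherit from the paper rather than introduce: with $\alpha=2\rho/m_2$ the phase $-\alpha\pi$ produces the shift $-2\rho$, not $-2\rho\,m_2/g$, in the $i_2$-congruence, so the pointwise identity $\vect{\ell}^{(\vect{m})}_{2\rho/m_2}(t^{(\vect{m})}_{l})=\vect{x}^{(\vect{m})}_{\vect{i}^{(\vect{m})}(l,\rho)}$ holds only when \eqref{1509221526B} is read with $2\rho$ in place of $2\rho\,m_2/g$ (the two agree, e.g., when $g=1$ or $m_1=m_2$, which covers all of the paper's examples, and the existence/counting argument behind part~c) adapts directly to the corrected congruence), an inconsistency present identically in the paper's proof and not a defect of your route.
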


\begin{proof}
By definition of the Lissajous curve and the sampling points $t^{(\vect{m})}_{l}$, we have
\[ \ts \vect{\ell}^{(\vect{m})}_{2 \rho/m_2} (t^{(\vect{m})}_{ l}) =
\left( \sin \left( \frac{ l \pi}{m_1} \right) \cos \left( \frac{l \pi}{m_2} - \frac{2 \rho}{m_2} \pi \right), \ \sin \left(\frac{ l \pi}{m_1} \right)
\sin \left(\frac{l \pi}{m_2} - \frac{2 \rho}{m_2} \pi \right), \
\cos \left( \frac{ l \pi}{m_1}\right)  \right).\]
Now applying Proposition \ref{1509221521} we can find $\vect{i}\in \Imm$ and $v \in\{-1,1\}$ such that the relations \eqref{1509221526} and \eqref{1509221526B} are
satisfied. In particular, this implies
\begin{align*} \vect{\ell}^{(\vect{m})}_{2 \rho /m_2} (t^{(\vect{m})}_{ l}) &=  \ts
\left( \sin \left( \frac{ v i_1 \pi}{m_1} \right) \cos \left( \frac{ i_2 \pi}{m_2} -\frac{1-v}{2} \pi \right), \ \sin \left( \frac{ v i_1 \pi}{m_1} \right)
\sin \left(\frac{ i_2 \pi}{m_2} -\frac{1-v}{2} \pi \right), \
\cos \left(  \frac{ v i_1 \pi}{m_1}\right)  \right) \\
&=  \ts
\left( \sin \left( \frac{ i_1 \pi}{m_1} \right) \cos \left( \frac{ i_2 \pi}{m_2} \right), \ \sin \left( \frac{ i_1 \pi}{m_1} \right)
\sin \left(\frac{ i_2 \pi}{m_2} \pi \right), \
\cos \left(  \frac{ i_1 \pi}{m_1}\right)  \right) = \vect{x}_{\vect{i}}^{(\vect{m})}.
\end{align*}
Going these steps back, we get the reverse implication:
if $\vect{x}_{\vect{i}}^{(\vect{m})}$ is given, we can fix $v \in\{-1,1\}$ and obtain by Proposition \ref{1509221521} a unique pair $(l,\rho)$ such that
$\vect{x}_{\vect{i}}^{(\vect{m})} = \vect{\ell}^{(\vect{m})}_{2 \rho /m_2} (t^{(\vect{m})}_{ l})$. \qed
\end{proof}

If $m_1$ and the even $m_2$ are relatively prime, Corollary \ref{cor-2} provides the second
attempt to characterize the self-intersection points of the spherical Lissajous curves mentioned at the beginning of this section. If $m_1$ and $m_2$ are not
relatively prime, it states that $\LSm$ can be generated by time equidistant samples of at most $g$ different Lissajous curves. 
Two examples of node sets $\LSm$ in which $m_1$ and
$m_2$ are not relatively prime are illustrated in Figure \ref{fig:LS-2}.

\section{Discrete orthogonality structure on $\Immp$}\label{1507091240}

We denote by $\mathcal{L}(\Immp)$ the space of all discrete functions on $\Immp$. For $\vect{\gamma}\in \Zz^{2}$, we consider
the functions $\polbas\in \mathcal{L}(\Immp)$ given by
\begin{equation}\label{A1508291531}
\polbas(\vect{i}) = \left\{ \begin{array}{ll} \cos(\gamma_{1} i_1 \pi/m_{1}) \mathrm{e}^{\imath \gamma_{2} i_2 \pi/m_{2} } & \text{if $\gamma_2$ is even}, \\                                              \imath \sin(\gamma_{1} i_1 \pi/m_{1}) \mathrm{e}^{\imath \gamma_{2} i_2 \pi/m_{2} } & \text{if $\gamma_2$ is odd}.
\end{array} \right.
\end{equation}
The functions $\polbas$ are a discretization of the parity-modified Fourier basis that we will discuss in the next section. The goal
of this section is to establish a discrete orthogonality of the functions $\polbas$ on $\Immp$ similar to the discrete 
orthogonality structure developed for the Lissajous-Chebyshev points in \cite{DenckerErb2017a,DenckerErb2015a}. This will be our main technical prerequisite
for the proofs of the upcoming interpolation results.

We denote the normalized uniform discrete measure on the power set of $\Immp$ by $\omega^{(\vect{m})}$. It is determined by
$\omega^{(\vect{m})}(\{\vect{i}\})= 1/(m_1m_2)$. The vector space $\mathcal{L}(\Immp)$ endowed
with the inner product
\[ \langle f,g \rangle_{\omega^{(\vect{m})}} = \int f \, \overline{g} \, \mathrm{d}\rule{1pt}{0pt}\omega^{(\vect{m})}
= \frac1{m_1m_2}\sum_{\vect{i} \in \Immp} f(\vect{i}) \overline{g(\vect{i})}\]
is a Hilbert space. The corresponding norm is denoted by $\|\cdot\|_{\omega^{(\vect{m})}}$.

\begin{proposition}\label{1507211320}
Let $\vect{m} \in \Nn^2$, $m_2$ be even and $\vect{\gamma}\in\Zz^{2}$. If $\int\polbas \mathrm{d}\rule{1pt}{0pt}\omega^{(\vect{m})}\neq 0$, then
\begin{equation}\label{1507201132}
\text{there exist $(h_1,h_2) \in \Zz_0^{2}$ with $\gamma_{1}=h_{1}m_{1}$, $\gamma_{2}=h_{2}m_{2}$, and
$h_1 + h_2 \equiv 0 \mod 2$}.
\end{equation}
If \eqref{1507201132} is satisfied, then $\int\polbas \mathrm{d}\rule{1pt}{0pt}\omega^{(\vect{m})}=1$.
\end{proposition}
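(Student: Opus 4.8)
The plan is to collapse the sum over $\Immp$ into a product of two one-dimensional geometric sums via the two-to-one correspondence of Proposition \ref{1509221521}, and then to read off the non-vanishing condition from those sums. Since $\vect{i}^{(\vect{m})}\colon H^{(\vect{m})}\times R^{(\vect{m})}\to\Immp$ hits every $\vect{i}\in\Immp$ exactly twice (part c), I would start from
\[ \sum_{\vect{i}\in\Immp}\polbas(\vect{i}) = \frac12\sum_{(l,\rho)\in H^{(\vect{m})}\times R^{(\vect{m})}}\polbas\big(\vect{i}^{(\vect{m})}(l,\rho)\big). \]
The first key step is to substitute the defining congruences \eqref{1509221526} and \eqref{1509221526B} into \eqref{A1508291531}. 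Because $\cos(\gamma_1\,\cdot\,\pi/m_1)$ and the factor $\mathrm{e}^{\imath\gamma_2\,\cdot\,\pi/m_2}$ depend on their arguments only modulo $2m_1$ and $2m_2$, the representative ambiguity drops out, and one checks that $\polbas(\vect{i}^{(\vect{m})}(l,\rho))$ is \emph{independent of the sign} $v\in\{-1,1\}$. This is exactly where the parity-modified design of $\polbas$ is essential: for even $\gamma_2$ the cosine is even in $v$ and $\mathrm{e}^{-\imath\gamma_2(1-v)\pi/2}=1$, while for odd $\gamma_2$ the sine contributes a factor $v$ that is cancelled by $\mathrm{e}^{-\imath\gamma_2(1-v)\pi/2}=v$. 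In both cases the summand factorises as a function of $l$ times $\mathrm{e}^{-2\imath\gamma_2\rho\pi/g}$.

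Next I would separate the two sums. The $\rho$-sum is the geometric sum $\sum_{\rho=0}^{g-1}\mathrm{e}^{-2\imath\gamma_2\rho\pi/g}$, equal to $g$ when $g\mid\gamma_2$ and $0$ otherwise. For the $l$-sum I would split the cosine (resp.\ sine) into $\mathrm{e}^{\pm\imath\gamma_1 l\pi/m_1}$, reducing it to $\tfrac12\big(\sum_l\mathrm{e}^{\imath\omega_+ l\pi}\pm\sum_l\mathrm{e}^{\imath\omega_- l\pi}\big)$ with $\omega_\pm=\pm\gamma_1/m_1+\gamma_2/m_2$. A short check shows $(2m_1m_2/g)\,\omega_\pm\in2\Zz$ always, so each of these geometric sums equals its length $2m_1m_2/g$ when $\omega_\pm\in2\Zz$ and is $0$ otherwise. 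Collecting constants, the whole integral $\int\polbas\,\mathrm{d}\omega^{(\vect{m})}$ collapses to $\tfrac12\big(\mathbf{1}[\omega_+\in2\Zz]\pm\mathbf{1}[\omega_-\in2\Zz]\big)\,\mathbf{1}[g\mid\gamma_2]$, the sign matching the parity of $\gamma_2$.

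The main obstacle is the number-theoretic step that converts these congruences into the clean statement \eqref{1507201132}. The decisive lemma is: \emph{assuming $g\mid\gamma_2$, one has $\omega_+\in2\Zz$ if and only if $\omega_-\in2\Zz$.} Since $\omega_+-\omega_-=2\gamma_1/m_1$, this is the claim that $g\mid\gamma_2$ together with $\omega_+\in2\Zz$ already forces $m_1\mid\gamma_1$. I would prove it by writing $m_1=g\mu_1$, $m_2=g\mu_2$ with $\gcd(\mu_1,\mu_2)=1$, clearing denominators in $\omega_+\in2\Zz$, and invoking $\gcd(m_1,m_2/g)=\gcd(g,\mu_2)=1$ — the same coprimality underlying Proposition \ref{1509221521} — to extract simultaneously $m_1\mid\gamma_1$, $m_2\mid\gamma_2$, and that $\gamma_1/m_1+\gamma_2/m_2$ is even.

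Granting this lemma the conclusion follows cleanly. For odd $\gamma_2$ we get $\mathbf{1}[\omega_+]-\mathbf{1}[\omega_-]=0$, so the integral vanishes, consistent with the fact that $\gamma_2=h_2m_2$ is necessarily even once \eqref{1507201132} holds (as $m_2$ is even). For even $\gamma_2$ we get $\mathbf{1}[\omega_+]+\mathbf{1}[\omega_-]=2\,\mathbf{1}[\omega_+]$, so $\int\polbas\,\mathrm{d}\omega^{(\vect{m})}\neq0$ precisely when $g\mid\gamma_2$ and $\omega_+\in2\Zz$, which by the lemma is exactly \eqref{1507201132}; tracking the constants then yields $\frac1{m_1m_2}\cdot m_1m_2=1$. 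I expect the two finicky points to be the $v$-independence bookkeeping in the first step and the coprimality argument in the lemma; the geometric sums themselves are routine.
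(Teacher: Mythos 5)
Your skeleton is exactly the paper's: the paper likewise rewrites $\int\polbas\,\mathrm{d}\omega^{(\vect{m})}$ as $\frac{1}{2m_1m_2}\sum_{l\in H^{(\vect{m})}}\sum_{\rho\in R^{(\vect{m})}}\mathrm{e}^{\imath(\cdots)}$ via Proposition \ref{1509221521}, symmetrizes over $v\in\{-1,1\}$ (your $v$-independence bookkeeping, including the cancellation $v\cdot v=1$ for odd $\gamma_2$, is correct, as is $N\omega_\pm\in 2\Zz$ for $N=2m_1m_2/g$), and then evaluates full-period geometric sums. The genuine gap is your ``decisive lemma'', which is \emph{false} as stated. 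Take $\vect{m}=(2,4)$, $\vect{\gamma}=(3,2)$: then $g=2$ divides $\gamma_2$ and $\omega_+=\tfrac32+\tfrac12=2\in2\Zz$, yet $m_1=2\nmid\gamma_1=3$. The coprimality you invoke also fails: $\gcd(m_1,m_2/g)=\gcd(g\mu_1,\mu_2)=\gcd(g,\mu_2)$, which here equals $2$, not $1$ (the same slip occurs in the paper's proof of Proposition \ref{1509221521}c, but it is a slip, not a usable fact). From $g\mid\gamma_2$ and $\omega_+\in2\Zz$ one can only extract $\mu_1\mid\gamma_1$ and $\mu_2\mid\gamma_2$, strictly weaker than \eqref{1507201132}. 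Concretely, your closed form returns $\tfrac12(1+0)\cdot 1=\tfrac12$ for this example, while the true integral is $0$: in the sum over $\I^{(2,4)}$ the factor $\cos(3i_1\pi/2)$ kills the odd rows and the $i_2$-sums over the rows $i_1\in\{0,2\}$ vanish. So the proposition is not established by your argument.

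The root cause is the $\rho$-phase. You took \eqref{1509221526B} literally and obtained the $g$-periodic factor $\mathrm{e}^{-2\imath\gamma_2\rho\pi/g}$, hence the too-weak condition $g\mid\gamma_2$. But as printed, \eqref{1509221526B} is incompatible with the two-to-one statement c) on which your opening identity rests: for $\vect{m}=(2,4)$ the index $(0,2)\in\I^{(2,4)}$ has an empty fiber while $(0,0)$ has four preimages, so the collapse $\sum_{\vect{i}\in\Immp}\polbas(\vect{i})=\frac12\sum_{(l,\rho)}\polbas(\vect{i}^{(\vect{m})}(l,\rho))$ already fails under that reading. The term $2\rho\,m_2/g$ in \eqref{1509221526B} is evidently a misprint for $2\rho$; this is the reading consistent with the generating curves $\vect{\ell}^{(\vect{m})}_{2\rho/m_2}$ of Corollary \ref{cor-2} and with the paper's own proof, whose key display carries the $m_2$-periodic phase $2\gamma_2\rho\pi/m_2$. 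With that phase the repair is short and makes your lemma true in corrected form: if an $l$-sum survives, then $\omega_+\in2\Zz$ or $\omega_-\in2\Zz$, and clearing denominators with $\gcd(\mu_1,\mu_2)=1$ forces $(m_2/g)\mid\gamma_2$; hence the $\rho$-sum $\sum_{\rho=0}^{g-1}\mathrm{e}^{-2\imath\gamma_2\rho\pi/m_2}$ is either $0$ or $g$, and equals $g$ precisely when $m_2\mid\gamma_2$. Writing $\gamma_2=h_2m_2$, the condition $\omega_\pm\in2\Zz$ then gives $m_1\mid\gamma_1$ and $h_1+h_2$ even at once, both indicators coincide, your constant-tracking yields the value $1$, and odd $\gamma_2$ is excluded automatically since $m_2$ is even. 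With this substitution the rest of your plan goes through and coincides with the paper's proof.
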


In the proof, we use for $N\in\Nn_0$ the well-known trigonometric identity
\begin{equation}\label{1506171253}
\sum_{l=0}^N \mathrm{e}^{\imath l \vartheta} = \left\{ \begin{array}{ll} \frac{\mathrm{e}^{\imath (N+1) \vartheta}-1}{\mathrm{e}^{\imath \vartheta}-1}
\quad & \vartheta\notin 2\pi\mathbb{Z},\\ N+1 & \vartheta \in 2\pi\mathbb{Z}. \end{array} \right.
\end{equation}

\begin{proof}
We start with the case $\gamma_2 \equiv 0 \mod 2$. Then,
using Proposition \ref{1509221521}, we obtain
\begin{align*}
\int\polbas\mathrm{d}\rule{1pt}{0pt}\omega^{(\vect{m})}
&=  \frac{1}{m_1m_2}\sum_{\vect{i}\in\Immp}
 \cos(\gamma_{1} i_1 \pi/m_{1}) \mathrm{e}^{\imath \gamma_{2} i_2 \pi/m_{2} } \\
&=  \frac{1}{2m_1m_2} \sum_{v \in \{-1,1\}}\sum_{\vect{i}\in\Immp}
\mathrm{e}^{\imath \,( \gamma_{1} v i_1 \pi/m_{1} + \gamma_{2} i_2 \pi/m_{2})}\\
&=  \frac{1}{2m_1m_2} \sum_{l \in H^{(\vect{m})}} \sum_{\rho \in R^{(\vect{m})}}
\mathrm{e}^{\imath \,( \gamma_{1} l \pi/m_{1} + \gamma_{2} l \pi/m_{2} + 2 \gamma_2 \rho \pi / m_2)}.
\end{align*}
In view of \eqref{1506171253}, this integral is only different from zero if $\gamma_{1} /m_{1} + \gamma_{2} /m_{2} \in 2 \Zz$ and $\gamma_2 /m_2 \in \Zz$
are satisfied. Thus, if we assume that the integral $\int\polbas\mathrm{d}\rule{1pt}{0pt}\omega^{(\vect{m})} \neq 0$ then $\gamma_2 = h_2 m_2$ with some
integer $h_2 \in \Zz$ and $\gamma_{1} /m_{1} + \gamma_2 / m_2 \in 2 \Zz$. Thus, also $\gamma_1$ is of the form $\gamma_1 = h_1 m_1$ with an integer
$h_1 \in \Zz$ and we further have $h_1 + h_2 \in 2 \Zz$.
This proves \eqref{1507201132}. On the other hand if \eqref{1507201132} is satisfied then \eqref{1506171253} gives
\begin{align*}
\int\polbas\mathrm{d}\rule{1pt}{0pt}\omega^{(\vect{m})}
&=  \frac{1}{2m_1m_2} \sum_{l \in H^{(\vect{m})}} \mathrm{e}^{\imath \,l ( \gamma_{1}  \pi/m_{1} + \gamma_{2} \pi/m_{2})} \sum_{\rho \in R^{(\vect{m})}}
\mathrm{e}^{\imath \,( 2 \gamma_2 \pi / g) \rho } = 1.
\end{align*}

In the case $\gamma_2 \equiv 1 \mod 2$, we obtain with Proposition \ref{1509221521}
\begin{align*}
\int\polbas\mathrm{d}\rule{1pt}{0pt}\omega^{(\vect{m})}
&=  \frac{1}{m_1m_2} \sum_{\vect{i}\in\Immp}
 \imath \sin(\gamma_{1} i_1 \pi/m_{1}) \mathrm{e}^{\imath \gamma_{2} i_2 \pi/m_{2} } \\
&=  \frac{1}{2m_1m_2} \sum_{v \in \{-1,1\}}\sum_{\vect{i}\in\Immp}
\mathrm{e}^{\imath \,( \gamma_{1} v i_1 \pi/m_{1} + \gamma_{2} i_2 \pi/m_{2} + (1-v)\pi /2 )}\\
&=  \frac{1}{2m_1m_2} \sum_{l \in H^{(\vect{m})}} \sum_{\rho \in R^{(\vect{m})}}
\mathrm{e}^{\imath \,( \gamma_{1} l \pi/m_{1} + \gamma_{2} l \pi/m_{2} + 2 \gamma_2 \rho \pi /m_2)}.
\end{align*}
Now, with the same argumentation as above the fact that this integral does not vanish implies the conditions
$\gamma_1 = h_1 m_1$, $\gamma_2 = h_2 m_2$ for some integers $h_1$ and $h_2$, as well as $h_1 + h_2 \in 2 \Zz$.
Furthermore, if \eqref{1507201132} is satisfied also in this case we get
$ \int\polbas\mathrm{d}\rule{1pt}{0pt}\omega^{(\vect{m})} = 1$. \qed
\end{proof}

\begin{figure}[htb]
	\centering
	\subfigure[The spectral index set $\vect{\Gamma}^{(7,6)} = \overline{\vect{\Gamma}}^{(7,6)}$.
	]{\includegraphics[scale=0.79]{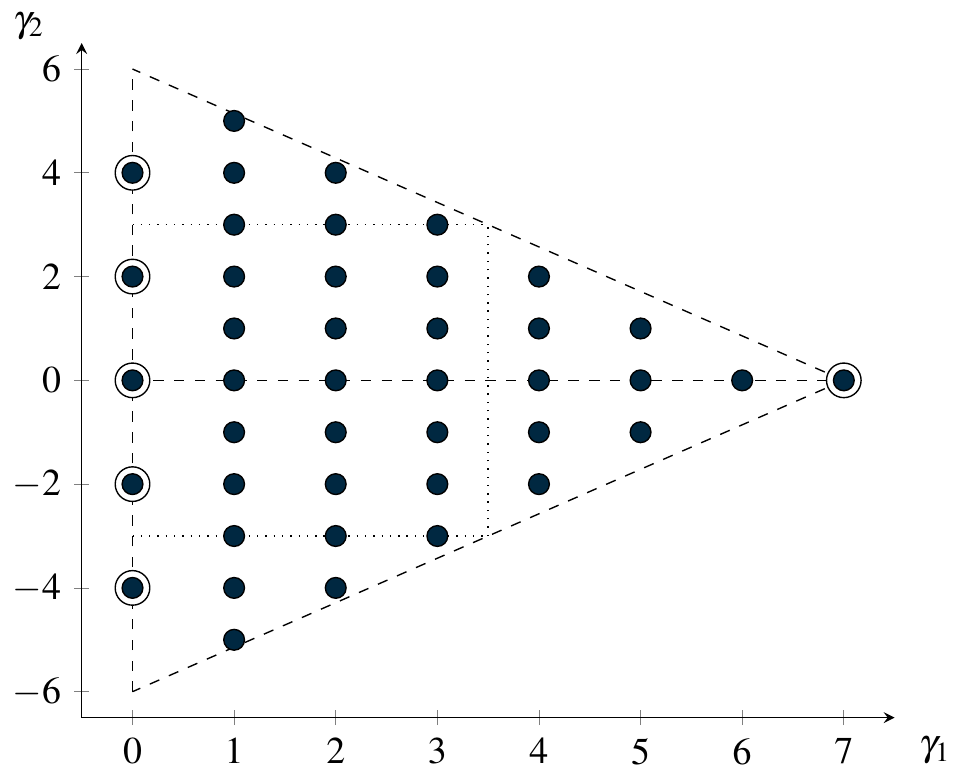}}
	\hfill	
	\subfigure[The spectral index sets $\vect{\Gamma}^{(6,6)}$ and $\overline{\vect{\Gamma}}^{(6,6)}$.
	]{\includegraphics[scale=0.79]{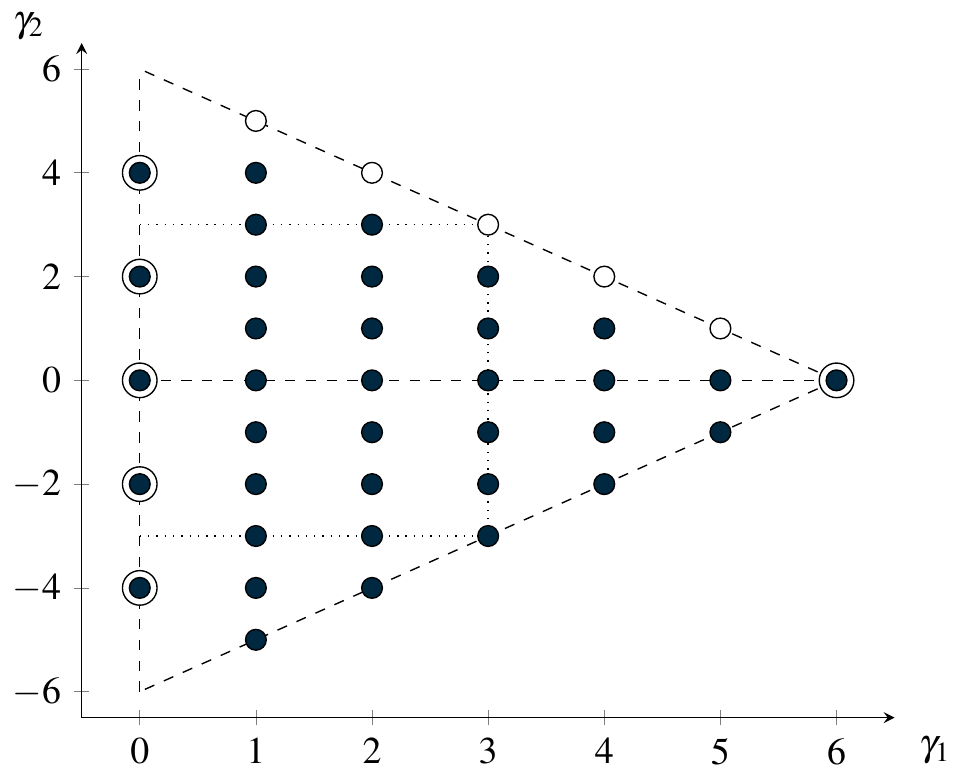}}
  	\caption{Illustration of the index sets $\Gam$ (black dots) and $\Gamsup$ (black and white dots).
  	The white dots are the elements of $\Gamup$. The circled dots
  	indicate the basis functions in \eqref{1508221825} with doubled norm.
  	} \label{fig:LS-3}
\end{figure}

Using the discrete functions $\polbas$ and Proposition \ref{1507211320}, we are now going to construct
two orthogonal basis systems in the space $\mathcal{L}(\Immp)$.
For this, we introduce the spectral index set
\begin{equation}\label{1508222042sup}
\Gamsup = \left\{\,\vect{\gamma}\in \Nn \times \Zz \left|\begin {array}{ll}
& 1 \leq \gamma_{1} \leq m_{1},\\
& \gamma_{1}/m_{1}+ |\gamma_{2}|/m_{2} \leq 1
\end{array}\right.
\right\} \cup \left\{\,(0,\gamma_2) \left|\begin {array}{ll}
& \gamma_2 \in 2 \Zz, \\
& |\gamma_{2}| < m_{2}
\end{array}\right.
\right\}.
\end{equation}
For odd $\gamma_2$, we have $\chi^{(\vect{m})}_{(0,\gamma_2)}(\vect{i}) = 0$ for all $\vect{i} \in \Immp$. Hence, these indices are excluded in \eqref{1508222042sup}. 
The index set $\Gamsup$ is in general still too large for our purpose. 
Some of the functions $\polbas$, $\vect{\gamma} \in \Gamsup$, are linearly dependent in $\mathcal{L}(\Immp)$.
This linear dependence in $\Gamsup$ is related to the two sets
\begin{align*}
\Gamup &= \left\{ \vect{\gamma}\in \Gamsup \ | \  \gamma_{1}/m_{1} + \gamma_{2}/m_{2} = 1, \ \gamma_2 \neq 0 \ \right\}, \\
\Gamdown &= \left\{ \vect{\gamma}\in \Gamsup \ | \ \gamma_{1}/m_{1} - \gamma_{2}/m_{2} = 1, \ \gamma_2 \neq 0 \ \right\}.
\end{align*}
In particular, the set given by
\begin{equation}\label{1508222042B}
\Gam = \Gamsup \setminus \Gamup .
\end{equation}
will soon turn out to be the right spectral index set for our considerations. Note that the choice of $\Gamup$ over $\Gamdown$ in \eqref{1508222042B} is
arbitrary and can be also switched for the subsequent results. Also note that if $m_1$ and $m_2$ are relatively prime then $\Gamup = \Gamdown = \emptyset$ and 
$\Gam = \Gamsup$. A simple counting argument
gives the following complexities:
\[ \begin{array}{rlllll}
\#\Gamsup &=&  m_1 m_2 + g - 1,  \qquad & \#\Gamup &=& g-1, \\
\#\Gam  &=&  m_1 m_2, & \#\Gamdown &=& g-1.
\end{array} \]
For the proof of the subsequent theorem, we will use the following product formulas:
\begin{align}
\label{1507222159}
\polbas \overline{\chi^{(\vect{m})}_{\vect{\gamma}'}}
&= \frac{1}{2}  \left(\chi^{(\vect{m})}_{(\gamma_1-\gamma_1',\gamma_2 - \gamma_2')}
+ (-1)^{\gamma_2'} \chi^{(\vect{m})}_{{(\gamma_1+\gamma_1',\gamma_2 - \gamma_2')}} \right),\\
\polbas \chi^{(\vect{m})}_{\vect{\gamma}'}
&= \frac{1}{2}  \left(\chi^{(\vect{m})}_{(\gamma_1+\gamma_1',\gamma_2 + \gamma_2')}
+ (-1)^{\gamma_2'} \chi^{(\vect{m})}_{{(\gamma_1-\gamma_1',\gamma_2 + \gamma_2')}} \right), \label{1507222159B}\\
\overline{\chi^{(\vect{m})}_{\vect{\gamma}}}
&= (-1)^{\gamma_2} \chi^{(\vect{m})}_{(\gamma_1,-\gamma_2)}. \label{1507222159C}
\end{align}
These identities are satisfied for all $\vect{\gamma},\vect{\gamma}' \in \Zz^2$ and can be derived directly from the definition \eqref{A1508291531}
of the functions $\polbas$ using standard identities for the products of two trigonometric functions.

\begin{theorem}\label{1507091911} Let $\vect{m} \in \Nn^2$, $m_2$ be even.
The functions $\polbas$, $\vect{\gamma}\in \Gam$, form an
orthogonal basis of the $m_1m_2$ dimensional inner product space $(\mathcal{L}(\Immp),\langle\,\cdot,\cdot\,\rangle_{\omega^{(\vect{m})}})$.
The norms of the basis functions $\polbas$ are given as
\begin{equation}\label{1508221825}
 \|\polbas\|_{\omega^{(\vect{m})}}^2 =
 \left\{ \begin{array}{rl} 1,\; & \text{if}\quad \vect{\gamma} \in \Gam, \ \gamma_1 \in \{0,m_1\},\\
 \frac12,\; & \text{if}\quad \vect{\gamma} \in \Gam, \ \gamma_1 \notin \{0,m_1\}.
   \end{array} \right.
\end{equation}

\end{theorem}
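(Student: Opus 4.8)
The plan is to verify orthogonality and the norm formula by expanding the inner product with the product formula \eqref{1507222159} and then invoking the integral criterion of Proposition \ref{1507211320}; the basis property will follow from a dimension count. Concretely, for $\vect{\gamma},\vect{\gamma}'\in\Gam$ the formula \eqref{1507222159} gives
\[ \langle \polbas, \chi^{(\vect{m})}_{\vect{\gamma}'} \rangle_{\omega^{(\vect{m})}} = \tfrac12 \int \chi^{(\vect{m})}_{(\gamma_1 - \gamma_1', \gamma_2 - \gamma_2')} \,\mathrm{d}\omega^{(\vect{m})} + \tfrac{(-1)^{\gamma_2'}}2 \int \chi^{(\vect{m})}_{(\gamma_1 + \gamma_1', \gamma_2 - \gamma_2')} \,\mathrm{d}\omega^{(\vect{m})}, \]
so that everything reduces to deciding, via Proposition \ref{1507211320}, when an index $(\delta_1,\delta_2)$ satisfies $\delta_1 = h_1 m_1$, $\delta_2 = h_2 m_2$ with $h_1+h_2$ even.

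First I would compute the norms by setting $\vect{\gamma}=\vect{\gamma}'$. The first integral then has index $(0,0)$ and equals $1$, while the second has index $(2\gamma_1,0)$, which meets the criterion of Proposition \ref{1507211320} exactly when $2\gamma_1$ is an even multiple of $m_1$, i.e. precisely for $\gamma_1\in\{0,m_1\}$ within the admissible range $0\le\gamma_1\le m_1$. Since the defining inequalities of $\Gam$ force $\gamma_2=0$ when $\gamma_1=m_1$ and $\gamma_2$ even when $\gamma_1=0$, the sign $(-1)^{\gamma_2'}$ is $+1$ in both boundary cases; this yields $\|\polbas\|_{\omega^{(\vect{m})}}^2 = \tfrac12(1+1)=1$ for $\gamma_1\in\{0,m_1\}$ and $\tfrac12$ otherwise, matching \eqref{1508221825}.

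The heart of the argument is orthogonality for distinct $\vect{\gamma},\vect{\gamma}'\in\Gam$, where I would show that each of the two integrals above vanishes. Because $0\le\gamma_1,\gamma_1'\le m_1$, the first index $\gamma_1-\gamma_1'$ can be a multiple of $m_1$ only if it lies in $\{-m_1,0,m_1\}$, and the second satisfies $\gamma_1+\gamma_1'\in\{0,m_1,2m_1\}$; similarly the bound $|\gamma_2|,|\gamma_2'|<m_2$ (which the inequality $\gamma_1/m_1+|\gamma_2|/m_2\le 1$ guarantees on the main part, and the strict condition $|\gamma_2|<m_2$ guarantees at $\gamma_1=0$) restricts $\gamma_2-\gamma_2'$ to $|h_2|\le 1$. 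Running through these finitely many cases, the parity constraint $h_1+h_2\equiv 0$ together with $\vect{\gamma}\neq\vect{\gamma}'$ eliminates the first integral outright, and all but the resonant case $\gamma_1+\gamma_1'=m_1$, $\gamma_2-\gamma_2'=\pm m_2$ (which forces $h_2$ odd) of the second.

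This resonant case is where I expect the main obstacle to lie, and it is exactly where the precise shape of $\Gam$ must be used. Writing $\gamma_2=\gamma_2'\pm m_2$ and $\gamma_1=m_1-\gamma_1'$ and feeding these back into the inequalities $\gamma_1/m_1+|\gamma_2|/m_2\le 1$ and $\gamma_1'/m_1+|\gamma_2'|/m_2\le 1$, one is forced onto the boundary $\gamma_1/m_1+\gamma_2/m_2=1$ (or its reflection), i.e. into $\Gamup$, which has been removed from $\Gamsup$ in \eqref{1508222042B}, or else into a violation of the strict bound $|\gamma_2|<m_2$ at a pole index. Either way one of $\vect{\gamma},\vect{\gamma}'$ fails to lie in $\Gam$, a contradiction, so the second integral also vanishes and $\langle \polbas, \chi^{(\vect{m})}_{\vect{\gamma}'}\rangle_{\omega^{(\vect{m})}}=0$. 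Finally, the functions $\polbas$, $\vect{\gamma}\in\Gam$, are orthogonal with strictly positive norm, hence linearly independent; since $\#\Gam = m_1m_2 = \#\Immp = \dim\mathcal{L}(\Immp)$, they span and therefore form an orthogonal basis.
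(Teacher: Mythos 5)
Your proposal is correct and follows essentially the same route as the paper's proof: expansion via the product formula \eqref{1507222159}, reduction to the integral criterion of Proposition \ref{1507211320}, elimination of the resonant case $\gamma_1+\gamma_1'=m_1$, $|\gamma_2-\gamma_2'|=m_2$ through the removal of $\Gamup$ together with the strict pole bound $|\gamma_2|<m_2$, and a dimension count for the basis property. Your explicit check of the sign $(-1)^{\gamma_2}$ at the boundary indices $\gamma_1\in\{0,m_1\}$ in the norm computation is in fact slightly more careful than the paper's wording; the paper merely organizes the orthogonality argument differently (splitting on whether $(m_1,0)$ occurs among $\vect{\gamma},\vect{\gamma}'$ rather than enumerating the values of $(h_1,h_2)$), but the substance is identical.
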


\begin{proof}
We will continuously use the product formula \eqref{1507222159} in this proof. Therefore, we denote the index vectors on the right hand side of
\eqref{1507222159} by
\[ \vect{\gamma}^+ = (\gamma_1+\gamma_1',\gamma_2-\gamma_2') \quad \text{and} \quad \vect{\gamma}^- = (\gamma_1-\gamma_1',\gamma_2-\gamma_2').\]
We assume first that $\vect{\gamma},\vect{\gamma}'\in \Gam$ and $\vect{\gamma} \neq \vect{\gamma}'$. We differentiate between two subcases. \\[1mm]
\emph{Case 1:} $\vect{\gamma} = (m_1,0)$ or $\vect{\gamma'} = (m_1,0)$. Without restriction, we assume
that $\vect{\gamma} = (m_1,0)$. Then, we have
\[ m_1 \leq m_1 + \gamma_1' < 2 m_1, \quad 0 < m_1 - \gamma_1' \leq m_1, \quad m_2 < \gamma_2' < m_2.\]
This implies that both $\vect{\gamma}^+$ and $\vect{\gamma}^-$ don't satisfy the condition \eqref{1507201132} and therefore, by \eqref{1507222159}, we obtain
$\int\polbas \overline{\chi^{(\vect{m})}_{\vect{\gamma}'}} \mathrm{d}\rule{1pt}{0pt}\omega^{(\vect{m})} = 0$. \\[2mm]
\emph{Case 2:} $\vect{\gamma} \neq (m_1,0)$ and $\vect{\gamma'} \neq (m_1,0)$. Then, based on our assumptions on $\vect{\gamma}$ and $\vect{\gamma}'$,
we obtain the inequalities
\[ 0 \leq \gamma_1 + \gamma_1' < 2 m_1, \quad -m_1 < \gamma_1 - \gamma_1' < m_1, \quad 2 m_2 < \gamma_2-\gamma_2' < 2 m_2.\]
For $\vect{\gamma}^-$, the condition \eqref{1507201132} can only be satisfied if $\vect{\gamma}=\vect{\gamma}'$, which is excluded by the
given assumptions. For $\vect{\gamma}^+$, the condition \eqref{1507201132} is satisfied if $\vect{\gamma}=\vect{\gamma}'=(0,0)$ or
if $\gamma_1 + \gamma_1' = m_1$ and $|\gamma_2-\gamma_2'| = m_2$ holds true.
The first instance can be excluded by the assumption $\vect{\gamma} \neq \vect{\gamma}'$. Also, the second instance can be excluded, since
by $\Gam \subset \Gamsup$ and $\Gam \cap \Gamup = \emptyset$ we have
\[ \frac{\gamma_1}{m_1} + \frac{\gamma_2}{m_2} + \frac{\gamma_1'}{m_1} - \frac{\gamma_2'}{m_2} < 2, \quad
\frac{\gamma_1}{m_1} - \frac{\gamma_2}{m_2} + \frac{\gamma_1}{m_1} + \frac{\gamma_2}{m_2} < 2. \]
Thus, by Proposition \ref{1507211320} and \eqref{1507222159} we obtain also for the second case
$\int\polbas \overline{\chi^{(\vect{m})}_{\vect{\gamma}'}} \mathrm{d}\rule{1pt}{0pt}\omega^{(\vect{m})} = 0$. In total, we
can conclude that the functions $\polbas$, $\vect{\gamma}\in \Gam$, are pairwise
orthogonal with respect to the inner product $\langle\,\cdot,\cdot\,\rangle_{\omega^{(\vect{m})}}$.\\[2mm]
We now have a look at the norms of the functions $\polbas$ and consider the case $\vect{\gamma} = \vect{\gamma}'$ in \eqref{1507222159}.
We get $\vect{\gamma}^+ = (2 \gamma_1,0)$ and $\vect{\gamma}^- = (0,0)$
Since $\vect{\gamma}\in \Gam$, we have $0 \leq 2 \gamma_1 \leq 2 m$. Therefore, condition \eqref{1507201132} is always satisfied for
$\vect{\gamma}^-$ and satisfied for $\vect{\gamma}^+$ precisely if $\gamma_1 \in \{0,m_1\}$. Based on this observation Proposition \ref{1507211320}
implies \eqref{1508221825}. \\
Finally, since the functions $\polbas$, $\vect{\gamma} \in \Gam$, form an orthogonal system consisting of $m_1 m_2$ elements and $\funpol(\Immp)$ is a space of the same
dimension, the functions $\polbas$, $\vect{\gamma} \in \Gam$, are an orthogonal basis of $\funpol(\Immp)$. \qed
\end{proof}

\begin{remark} Theorem \ref{1507091911} holds also true if we replace $\Gamup$ in the definition \eqref{1508222042B} of $\Gam$ with the counterpart
$\Gamdown$. The respective proof is identical.
\end{remark}

For practical issues it is convenient to have also a real basis for the vector space $\funpol(\Immp)$. For this, we introduce a second
set of basis functions as
\begin{equation}\label{1702291124}
\polbasreal = \! \left\{ \! \begin{array}{ll}
\Re \chi^{(\vect{m})}_{\vect{\gamma}}
                                        & \text{if}  \quad \begin{array}{l} \vect{\gamma} \in \Gam \setminus \Gamdown, \gamma_2 \leq 0, \quad \text{or}\\
                                                                             \vect{\gamma} \in \Gamdown, \gamma_1 \leq m_1/2, \end{array}  \\[5mm]
\Im \chi^{(\vect{m})}_{\vect{\gamma}}
                                        & \text{if}  \quad \begin{array}{l} \vect{\gamma} \in \Gam \setminus \Gamdown, \gamma_2 > 0, \quad \text{or}\\
                                                                             \vect{\gamma} \in \Gamdown, \gamma_1 > m_1/2. \end{array}
\end{array} \right.
\end{equation}

\begin{theorem} Let $\vect{m} \in \Nn^2$, $m_2$ be even.
The functions $\polbasreal$, $\vect{\gamma}\in \Gam$, are a real
orthogonal basis of the inner product space $(\funpol(\Immp),\langle\,\cdot,\cdot\,\rangle_{\omega^{(\vect{m})}})$.
The norms for $\polbasreal$ read as
\begin{equation}\label{1508221826}
 \|\polbasreal\|_{\omega^{(\vect{m})}}^2 =
 \left\{ \begin{array}{rl}     1\; & \text{if}\quad \vect{\gamma} \in \{(0,0), (m_{1},0) \}, \\
                             \frac12\; & \text{if}\quad \vect{\gamma} \in \Gam \setminus \{(0,0),(m_1,0)\}, \; \text{$\gamma_1 = 0$ or $\gamma_2 = 0$},\\[1mm]
                             \frac12\; & \text{if}\quad \vect{\gamma} = (m_1/2,-m_2/2), \\
                             \frac14\; & \text{for all other $\vect{\gamma} \in \Gam$}.
\end{array} \right.
\end{equation}
\end{theorem}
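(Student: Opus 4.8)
The plan is to deduce the real statement from the complex orthogonal basis of Theorem~\ref{1507091911} by organising $\Gam$ into orbits of the reflection $\vect{\gamma}\mapsto(\gamma_1,-\gamma_2)$. The starting point is the conjugation identity \eqref{1507222159C}, which on $\Immp$ yields
\[
\Re\chi^{(\vect{m})}_{\vect{\gamma}} = \tfrac12\bigl(\chi^{(\vect{m})}_{(\gamma_1,\gamma_2)} + (-1)^{\gamma_2}\chi^{(\vect{m})}_{(\gamma_1,-\gamma_2)}\bigr),\qquad
\Im\chi^{(\vect{m})}_{\vect{\gamma}} = \tfrac{1}{2\imath}\bigl(\chi^{(\vect{m})}_{(\gamma_1,\gamma_2)} - (-1)^{\gamma_2}\chi^{(\vect{m})}_{(\gamma_1,-\gamma_2)}\bigr),
\]
so that every $\polbasreal$ is a combination of the two complex functions indexed by $(\gamma_1,\pm\gamma_2)$. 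I would split $\Gam$ into three groups: the axis $\gamma_2=0$; the interior indices with $\gamma_2\neq 0$ and $\vect{\gamma}\notin\Gamdown$; and the lower boundary $\Gamdown$.

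For the first two groups the reflection stays inside $\Gam$. If $\gamma_2=0$ then $\chi^{(\vect{m})}_{\vect{\gamma}}$ is already real and $\polbasreal=\chi^{(\vect{m})}_{\vect{\gamma}}$, with squared norm $1$ or $\tfrac12$ read off from \eqref{1508221825}. If $\vect{\gamma}$ is interior, then $(\gamma_1,-\gamma_2)$ is again interior (it could only fall into $\Gamup$ if $\vect{\gamma}\in\Gamdown$), so the reflection pair $\{(\gamma_1,\gamma_2),(\gamma_1,-\gamma_2)\}$ lies entirely in $\Gam$ and, by the sign condition in \eqref{1702291124}, contributes exactly one $\Im$ member and one $\Re$ member. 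Writing both through the displayed formulas as combinations of the orthogonal, equal-norm functions $\chi^{(\vect{m})}_{(\gamma_1,\pm\gamma_2)}$, orthogonality within the pair follows from a cancellation of the two equal norms, and each squared norm equals $\tfrac14\bigl(\|\chi^{(\vect{m})}_{(\gamma_1,\gamma_2)}\|^2_{\omega^{(\vect{m})}}+\|\chi^{(\vect{m})}_{(\gamma_1,-\gamma_2)}\|^2_{\omega^{(\vect{m})}}\bigr)$, which is $\tfrac12$ when $\gamma_1=0$ and $\tfrac14$ otherwise.

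The hard part is $\Gamdown$, where the reflection $(\gamma_1,-\gamma_2)$ leaves $\Gam$ and falls into the discarded set $\Gamup$. Here I would establish the aliasing relation that, on $\Immp$,
\[
\overline{\chi^{(\vect{m})}_{\vect{\gamma}}} = \chi^{(\vect{m})}_{(m_1-\gamma_1,\,-m_2-\gamma_2)}\qquad(\vect{\gamma}\in\Gamdown),
\]
proved by substituting the $(l,\rho)$-parametrisation of Proposition~\ref{1509221521} into \eqref{A1508291531} and using the defining identity $\gamma_1/m_1-\gamma_2/m_2=1$ of $\Gamdown$ together with $m_2$ even, which collapses one of the two resulting exponentials to $(-1)^l$ and lets the two sides be matched term by term. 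The index map $\tilde{\vect{\gamma}}=(m_1-\gamma_1,-m_2-\gamma_2)$ is an involution of $\Gamdown$ with the single fixed point $(m_1/2,-m_2/2)$; at this fixed point the relation says $\chi^{(\vect{m})}_{\vect{\gamma}}$ is real on $\Immp$, so $\polbasreal=\chi^{(\vect{m})}_{\vect{\gamma}}$ has squared norm $\tfrac12$, giving the third line of \eqref{1508221826}. On a genuine two-element orbit $\{\vect{\gamma},\tilde{\vect{\gamma}}\}$ with $\gamma_1<m_1/2<\tilde\gamma_1$, the selection in \eqref{1702291124} assigns $\Re$ to $\vect{\gamma}$ and $\Im$ to $\tilde{\vect{\gamma}}$, and the aliasing relation rewrites both as combinations of the genuine basis functions $\chi^{(\vect{m})}_{\vect{\gamma}},\chi^{(\vect{m})}_{\tilde{\vect{\gamma}}}\in\Gamdown\subset\Gam$, so the same equal-norm cancellation yields orthogonality and squared norm $\tfrac14$.

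To finish, I would note that the three groups partition $\Gam$ and that the complex indices appearing in the expansions of the $\polbasreal$ from different orbits form disjoint subsets of $\Gam$; since the $\chi^{(\vect{m})}_{\vect{\delta}}$, $\vect{\delta}\in\Gam$, are mutually orthogonal by Theorem~\ref{1507091911}, functions attached to different orbits are orthogonal, while the within-orbit orthogonality was verified above. None of the $\polbasreal$ vanishes, so one obtains $\#\Gam=m_1m_2$ pairwise orthogonal, hence linearly independent, real functions in the $m_1m_2$-dimensional space $\funpol(\Immp)$, which therefore form a real orthogonal basis; collecting the squared norms from the three groups reproduces \eqref{1508221826}. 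I expect the boundary aliasing relation on $\Gamdown$ to be the only substantial step, the rest being bookkeeping of reflection orbits on top of Theorem~\ref{1507091911}.
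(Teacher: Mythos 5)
Your argument is correct, but it takes a genuinely different route from the paper's proof. The paper proceeds by direct recomputation: it writes $\Re\chi^{(\vect{m})}_{\vect{\gamma}}=\frac12\bigl(\chi^{(\vect{m})}_{\vect{\gamma}}+\overline{\chi^{(\vect{m})}_{\vect{\gamma}}}\bigr)$ and $\Im\chi^{(\vect{m})}_{\vect{\gamma}}=\frac1{2\imath}\bigl(\chi^{(\vect{m})}_{\vect{\gamma}}-\overline{\chi^{(\vect{m})}_{\vect{\gamma}}}\bigr)$, expands every inner product and squared norm via the product formulas \eqref{1507222159}--\eqref{1507222159C} into a short sum of functions $\chi^{(\vect{m})}_{\vect{\delta}}$, and evaluates each summand with Proposition \ref{1507211320} by checking condition \eqref{1507201132} term by term --- essentially repeating the computation of Theorem \ref{1507091911}; for instance, the value $\frac12$ at $\vect{\gamma}=(m_1/2,-m_2/2)$ falls out because among the six product terms exactly $\chi^{(\vect{m})}_{(0,0)}$ (with weight $2$) and $\chi^{(\vect{m})}_{(m_1,\mp m_2)}$ satisfy \eqref{1507201132}. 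You instead use Theorem \ref{1507091911} as a black box and add one new lemma, the aliasing relation $\overline{\chi^{(\vect{m})}_{\vect{\gamma}}}=\chi^{(\vect{m})}_{(m_1-\gamma_1,\,-m_2-\gamma_2)}$ on $\Immp$. This relation is true: it is precisely the glide-reflection (BMC) symmetry the paper itself records in Section \ref{sec:implementation} in the equivalent form $\chi^{(\vect{m})}_{\vect{\gamma}}=(-1)^{\gamma_2}\chi^{(\vect{m})}_{(m_1-\gamma_1,\gamma_2-m_2)}$ for $\vect{\gamma}\in\Gamup$; in fact it holds for every $\vect{\gamma}\in\Zz^2$, using only that $i_1+i_2$ is even on $\Immp$ and that $m_2$ is even, so a two-line pointwise verification suffices and your detour through the $(l,\rho)$-parametrization of Proposition \ref{1509221521} is unnecessary. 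Granting this, your bookkeeping is sound: the reflection $(\gamma_1,\gamma_2)\mapsto(\gamma_1,-\gamma_2)$ preserves $\Gam\setminus\Gamdown$ (its image could only meet $\Gamup$), your involution preserves $\Gamdown$, the two real functions attached to a two-element orbit expand exactly over that orbit, so cross-orbit orthogonality follows from Theorem \ref{1507091911} and within-orbit orthogonality from the equal-norm cancellation ($\|\chi^{(\vect{m})}_{\vect{\gamma}}\|^2_{\omega^{(\vect{m})}}$ in \eqref{1508221825} depends only on whether $\gamma_1\in\{0,m_1\}$, and $\gamma_1=m_1$ forces $\gamma_2=0$); the dimension count then finishes the argument, and all four norm values in \eqref{1508221826} check out. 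One small caveat: the fixed point $(m_1/2,-m_2/2)$ exists only when $m_1$ (equivalently $g$) is even, so that line of \eqref{1508221826} is vacuous for odd $m_1$; you should state this rather than speak of ``the single fixed point'' unconditionally. As for what each approach buys: the paper's computation is uniform and self-contained, never needing orbit combinatorics; yours is structurally more illuminating, since it explains why the selection rule \eqref{1702291124} switches from the sign of $\gamma_2$ to the position of $\gamma_1$ relative to $m_1/2$ on $\Gamdown$, and why $(m_1/2,-m_2/2)$ is exceptional --- it is the unique fixed point of the involution, at which $\chi^{(\vect{m})}_{\vect{\gamma}}$ is already real on $\Immp$.
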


\begin{proof} The functions $\polbasreal$ are clearly all real and given as $\Re \chi^{(\vect{m})}_{(\vect{\gamma})} = \frac12
(\chi^{(\vect{m})}_{\vect{\gamma}} + \overline{\chi^{(\vect{m})}_{\vect{\gamma}}})$ and  $\Im \chi^{(\vect{m})}_{(\vect{\gamma})} = \frac1{2 \imath}
(\chi^{(\vect{m})}_{\vect{\gamma}} - \overline{\chi^{(\vect{m})}_{\vect{\gamma}}})$. Now, based on the formulas \eqref{1507222159}, \eqref{1507222159B}
and \eqref{1507222159C} as well as Proposition \ref{1507211320}, the statements about the orthogonality and the norms
of the basis functions can be derived similarly as in Theorem \ref{1507091911}. As a template for the entire procedure, 
we calculate the norm $\|\polbasreal\|_{\omega^{(\vect{m})}}^2$ for the first case
given in \eqref{1702291124}, i.e., if $\polbasreal = \Re \chi^{(\vect{m})}_{(\vect{\gamma})}$. Here, we get
\begin{align*} \|\polbasreal\|_{\omega^{(\vect{m})}}^2 &= \frac14 \int \left| \chi^{(\vect{m})}_{(\gamma_1, \gamma_2)} + (-1)^{\gamma_2}
\chi^{(\vect{m})}_{(\gamma_1, -\gamma_2)}\right|^2 \mathrm{d}\rule{1pt}{0pt}\omega^{(\vect{m})} \\
&= \frac{1}{8} \int \left( 2 \chi^{(\vect{m})}_{(0, 0)} +2 \chi^{(\vect{m})}_{(2 \gamma_1, 0)} +
\chi^{(\vect{m})}_{(0, 2\gamma_2)} + \chi^{(\vect{m})}_{(0, - 2\gamma_2)} + \chi^{(\vect{m})}_{(2 \gamma_1, 2\gamma_2)}  + \chi^{(\vect{m})}_{(2 \gamma_1, - 2\gamma_2)} \right) \mathrm{d}\rule{1pt}{0pt}\omega^{(\vect{m})},
\end{align*}
where we used the product formulas \eqref{1507222159}, \eqref{1507222159B}
and \eqref{1507222159C} to manipulate the function terms in the integral. Next, we check in which cases the condition \eqref{1507201132} given in Proposition \ref{1507211320} is satisfied
and determine in this way the value of the norm. If $\vect{\gamma} \in \{(0,0), (m_{1},0) \}$, then condition \eqref{1507201132} is satisfied for all six spectral functions in the integral
and we therefore obtain $\|\polbasreal\|_{\omega^{(\vect{m})}}^2 = 1$. If $\vect{\gamma} \in \Gam \setminus \{(0,0),(m_1,0)\}$ and $\gamma_1 = 0$ or $\gamma_2 = 0$ then 
condition \eqref{1507201132} is satisfied only for three of the given basis functions and we obtain $\|\polbasreal\|_{\omega^{(\vect{m})}}^2 = \frac12$. The same holds true
if $\vect{\gamma} = (m_1/2,-m_2/2)$. In the remaining case $\gamma_1>0$, $\gamma_2 > 0$, the condition \eqref{1507201132} is only satisfied for $\chi^{(\vect{m})}_{(0, 0)}$ and we thus 
obtain $\|\polbasreal\|_{\omega^{(\vect{m})}}^2 = \frac14$. \qed
\end{proof}

\section{Interpolation on spherical Lissajous points} \label{sec:interpolation}
We are now ready to set up an interpolation scheme for the Lissajous nodes $\LSm$ on the sphere $\Sd$. We consider general frequencies $\vect{m} \in \Nn^2$ where $m_2$ is even.
For simplicity, we will formulate the interpolation problem in the domain $[0,\pi] \times [0,2\pi)$ of the spherical coordinates $(\theta,\vph)$. By \eqref{eq:09172},
the corresponding nodes in spherical coordinates are given as $(\theta^{(m_1)}_{i_1}, \vph^{(m_2)}_{i_2})$, $\vect{i} \in \Immp$. In case we need a one to
one correspondence for the poles of $\Sd$, we will restrict ourselves to the index set $\Imms \subset \Immp$ defined in \eqref{eq:Imms}.

For $\vect{\gamma} \in \Zz^2$, we introduce now the following basis functions in spherical coordinates $(\theta,\vph) \in [0,\pi] \times [0,2\pi)$:
\begin{equation*}
\polbascont(\theta,\vph) = \left\{ \begin{array}{ll} \cos(\gamma_{1} \theta) \mathrm{e}^{\imath \gamma_{2} \vph } & \text{if $\gamma_2$ is even}, \\
                                             \imath \sin(\gamma_{1} \theta) \mathrm{e}^{\imath \gamma_{2} \vph } & \text{if $\gamma_2$ is odd}.
\end{array} \right.
\end{equation*}
This Fourier type basis for functions on the unit sphere is exactly the basis introduced in \cite{Merilees1973,Orszag1974} and mentioned in the introduction.
In the literature \cite{Boyd2000}, it is referred to as parity-modified Fourier basis.
By $\Pi$, we denote the space spanned by all linear combinations of the functions $\polbascont$, $\vect{\gamma} \in \Zz^2$.

The interpolation problem we want to solve can be stated as follows: for given data values $f \in \funpol(\Immp)$ we want to find a function $ P^{(\vect{m})}_f \in \Pi$ such that
\begin{equation}\label{1508220011}
 P^{(\vect{m})}_f (\theta^{(m_1)}_{i_1}, \vph^{(m_2)}_{i_2}) = f({\vect{i}}) \quad \text{for all}\quad \vect{i} \in \Immp.
\end{equation}
In order that \eqref{1508220011} is uniquely solvable, we have to specify an appropriate subspace of $\Pi$ for
the interpolant $P^{(\vect{m})}_f$. For this, the relation
\begin{equation}\label{1508201411}
\polbascont(\theta^{(m_1)}_{i_1}, \vph^{(m_2)}_{i_2}) = \polbas(\vect{i}), \qquad \vect{\gamma}\in\Zz^{2}, \quad \vect{i}\in \Immp,
\end{equation}
between the double Fourier basis $\polbascont$ and the discrete orthogonal basis $\polbas$ for $\funpol(\Immp)$ plays a crucial role.
This relation \eqref{1508201411} and the results of the previous section motivate the introduction of the interpolation space
\[\Pim = \mathrm{span} \left\{\, \polbascont \,\left|\, \vect{\gamma} \in \Gam \right. \right\}.\]

\begin{example}
For even $m \in \Nn$, consider the frequencies $\vect{m} = (m-1,m)$. Then, $\Pim$ is exactly the space of all parity-modified basis functions
$\polbascont$ of total degree $|\gamma_1| + |\gamma_2| \leq m-1$. Thus, in this case the points $\LSm$ can be considered as a spherical analogue of
the Padua points studied in \cite{BosDeMarchiVianelloXu2006,CaliariDeMarchiVianello2005}. For $\vect{m} = (m,m)$, they are a spherical version of the bivariate Morrow-Patterson-Xu points
introduced and studied in \cite{MorrowPatterson1978,Xu1996}. For general $\vect{m} \in \Nn^2$, $m_2$ even, the theory presented in this paper is a spherical analog of the bivariate interpolation
theory based on the nodes of two-dimensional Lissajous curves studied in \cite{DenckerErb2017a,DenckerErb2015a,Erb2015,ErbKaethnerAhlborgBuzug2015}.

In contrast to the actual work, in the literature usually a tensor-product grid in spherical coordinates is used to construct a spectral interpolation scheme on $\Sd$ based on the parity-modified double Fourier basis $\polbascont$, see \cite{Boyd1978,Ganesh1998,Orszag1974,TownsendWilberWright2016}. The corresponding interpolation spaces are defined as 
$\operatorname{span} \{ \polbascont \ | \ 0 \leq \gamma_1 \leq m_1, \ |\gamma_2| \leq m_2\}$ by using a rectangular spectral index set. Respective variants are also established for bivariate polynomial interpolation and are sometimes referred to as maximal degree spaces. A comparison between different bivariate interpolation spaces related to total degree and maximal degree spaces can be found in the treatise \cite{Trefethen2017}.  
\end{example}

In order to have a one to one correspondence between data values on $\LSm$ and $\Immp$,
we additionally consider the subspaces
\begin{align}
\funpols(\Immp) &= \left\{f \in \mathcal{L}(\Immp) \ | \ f(\vect{i}) \equiv f(\vect{j}) \quad \text{if} \; i_1 = j_1 \in \{0,m_1\} \  \right\}. \notag \\
\Pims &= \left\{P \in \Pim \ | \ P(\theta^{(m_1)}_{i_1}, \vph^{(m_2)}_{i_2})) \equiv P(\theta^{(m_1)}_{j_1}, \vph^{(m_2)}_{j_2})) 
\quad \text{if} \; i_1 = j_1 \in \{0,m_1\} \  \right\}. \label{eq:186181745}
\end{align}
Clearly $\funpols(\Immp) \subset \funpol(\Immp)$ and $\dim \funpols(\Immp) = \# \Imms = \# \LSm = \dim \Pims$. The data functions $f \in \funpols(\Immp)$ are 
constant at the coordinates $i_1 = 0$ and $i_1 = m_1$ corresponding to the poles of the sphere. The space $\funpols(\Immp)$ can therefore be used to describe 
all possible data sets on the Lissajous nodes $\LSm$. The subspace $\Pims \subset \Pim$ gives all elements $P \in \Pim$ such that the data set $p(\vect{i}) = P(\theta^{(m_1)}_{i_1}, \vph^{(m_2)}_{i_2}))$, $\vect{i} \in \Imm$, is contained in $\funpols(\Immp)$. Note that, although $P \in \Pims$ satisfies
this discrete pole condition, the function $P \in \Pims$ is in general not constant on the entire lines $\theta = 0$ and $\theta = \pi$ describing the poles. \\

As a fundamental basis for the interpolation problem \eqref{1508220011}, we introduce for $\vect{i} \in \Immp$ the Lagrange functions
\begin{equation} \label{1508220009}
 L^{(\vect{m})}_{\vect{i}}(\theta,\vph) = \frac{1}{m_1m_2} \sum_{\vect{\gamma} \in \Gam} \frac{\overline{\polbas(\vect{i})}}{\|\polbas\|_{\omega^{(\vect{m})}}^2}
 \polbascont(\theta,\vph),
 \qquad (\theta,\vph) \in [0,\pi] \times [0,2\pi).
\end{equation}
For the subset $\Imms$ defined in \eqref{eq:Imms} we use the related variant
\begin{equation} \label{1508220012}
 L^{(\vect{m})}_{\mathrm{S},\vect{i}} = \left\{ \begin{array}{ll}
                                                L^{(\vect{m})}_{\vect{i}} &  \text{if $\vect{i} \in \Imms$, $i_1 \neq \{0,m_1\}$,} \\
                                                \sum_{\vect{j} \in \Immp: j_1 = i_1} L^{(\vect{m})}_{\vect{j}} & \text{if $\vect{i} \in \Imms$, $i_1 \in \{0,m_1\}$.}
                                                \end{array} \right.
\end{equation}
We can now state our main result.
\begin{theorem} \label{201512131945}
Let $\vect{m} \in \Nn^2$, $m_2$ be even and $f\in \funpol(\Immp)$. The interpolation problem \eqref{1508220011} has a unique solution
in the polynomial space $\Pim$ given by
\begin{equation*} 
P^{(\vect{m})}_{f}(\theta,\vph) = \sum_{\vect{i} \in \Immp} f({\vect{i}}) \, L^{(\vect{m})}_{\vect{i}}(\theta,\vph).
\end{equation*}
The Lagrange functions $L^{(\vect{m})}_{\vect{i}}$, $\vect{i} \in \Immp$, form a basis of the vector space $\Pim$. \\
For $f\in \funpols(\Immp)$, the interpolation problem \eqref{1508220011} has a solution of the form
\begin{equation*}
P^{(\vect{m})}_{f}(\theta,\vph) = \sum_{\vect{i} \in \Imms} f({\vect{i}}) \, L^{(\vect{m})}_{\mathrm{S},\vect{i}}(\theta,\vph).
\end{equation*}
This solution is unique in the subspace $\Pims \subset \Pim$ spanned by the functions $L^{(\vect{m})}_{\mathrm{S},\vect{i}}$, $\vect{i} \in \Imms$.
\end{theorem}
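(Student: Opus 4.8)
The plan is to transfer the whole problem into the discrete space $\funpol(\Immp)$ by means of the sampling identity \eqref{1508201411} and then to invoke the discrete orthogonality of Theorem \ref{1507091911}. The central step is the cardinal property $L^{(\vect{m})}_{\vect{i}}(\theta^{(m_1)}_{j_1},\vph^{(m_2)}_{j_2}) = \delta_{\vect{i}\vect{j}}$ for all $\vect{i},\vect{j}\in\Immp$. To obtain it I would use \eqref{1508201411} to rewrite the value of the Lagrange function \eqref{1508220009} at a node as $\frac{1}{m_1m_2}\sum_{\vect{\gamma}\in\Gam}\|\polbas\|_{\omega^{(\vect{m})}}^{-2}\,\overline{\polbas(\vect{i})}\,\polbas(\vect{j})$. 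This is precisely the value at $\vect{j}$ of the expansion of the indicator $\1_{\{\vect{i}\}}$ of $\{\vect{i}\}$ in the orthogonal basis $\{\polbas:\vect{\gamma}\in\Gam\}$ of $\funpol(\Immp)$ provided by Theorem \ref{1507091911}, since $\langle\1_{\{\vect{i}\}},\polbas\rangle_{\omega^{(\vect{m})}}=\frac{1}{m_1m_2}\overline{\polbas(\vect{i})}$; Parseval's identity in the finite-dimensional Hilbert space $(\funpol(\Immp),\langle\,\cdot,\cdot\,\rangle_{\omega^{(\vect{m})}})$ then yields exactly $\1_{\{\vect{i}\}}(\vect{j})=\delta_{\vect{i}\vect{j}}$. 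This reproducing-kernel computation is the heart of the argument; everything that follows is linear algebra.

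Given the cardinal property, $P^{(\vect{m})}_f=\sum_{\vect{i}\in\Immp}f(\vect{i})\,L^{(\vect{m})}_{\vect{i}}$ evidently solves \eqref{1508220011}, and it lies in $\Pim$ because each $L^{(\vect{m})}_{\vect{i}}$ is a linear combination of the $\polbascont$, $\vect{\gamma}\in\Gam$. For uniqueness I would consider the evaluation map $\mathrm{ev}\colon\Pim\to\funpol(\Immp)$, $P\mapsto(P(\theta^{(m_1)}_{i_1},\vph^{(m_2)}_{i_2}))_{\vect{i}\in\Immp}$. By \eqref{1508201411} it sends the spanning family $\{\polbascont:\vect{\gamma}\in\Gam\}$ of $\Pim$ onto the orthogonal basis $\{\polbas:\vect{\gamma}\in\Gam\}$ of $\funpol(\Immp)$. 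A spanning set that maps to a linearly independent set must itself be linearly independent, so the $\polbascont$, $\vect{\gamma}\in\Gam$, form a basis of $\Pim$, whence $\dim\Pim=m_1m_2=\dim\funpol(\Immp)$ and $\mathrm{ev}$ is a bijection; this gives uniqueness in $\Pim$. The Lagrange functions $L^{(\vect{m})}_{\vect{i}}$ are then $m_1m_2$ elements of the $m_1m_2$-dimensional space $\Pim$, and the cardinal property forces any vanishing linear combination to have zero coefficients, so they are a basis of $\Pim$.

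For the reduced statement I would exploit that $f\in\funpols(\Immp)$ is constant across all indices with $i_1=0$ and across all with $i_1=m_1$. Grouping the corresponding terms in $\sum_{\vect{i}\in\Immp}f(\vect{i})L^{(\vect{m})}_{\vect{i}}$ and applying the definition \eqref{1508220012} identifies this full interpolant with $\sum_{\vect{i}\in\Imms}f(\vect{i})L^{(\vect{m})}_{\mathrm{S},\vect{i}}$; in particular the reduced formula solves \eqref{1508220011}. Since the interpolant reproduces the (pole-constant) values of $f$, it satisfies the discrete pole condition \eqref{eq:186181745} and thus lies in $\Pims$. Uniqueness in $\Pims$ follows from the bijection above: as $\Pims=\mathrm{ev}^{-1}(\funpols(\Immp))$, the map $\mathrm{ev}$ restricts to a bijection $\Pims\to\funpols(\Immp)$, which also recovers $\dim\Pims=\#\Imms$. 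Equivalently, any $P\in\Pims$ vanishing at the nodes indexed by $\Imms$ vanishes at every node indexed by $\Immp$, because \eqref{eq:186181745} propagates the zero to the remaining pole representatives, so $P=0$ by the uniqueness already established in $\Pim$.

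I expect the only genuine subtlety to be the bookkeeping at the poles in the reduced case: correctly matching the multiply indexed pole nodes of $\Immp$ with the single representatives in $\Imms$, and verifying that the discrete pole condition \eqref{eq:186181745} is exactly what makes $\mathrm{ev}$ restrict to a bijection onto $\funpols(\Immp)$. The cardinal-property computation itself is routine once Theorem \ref{1507091911} is available.
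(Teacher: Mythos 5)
Your proposal is correct and follows essentially the same route as the paper's proof: you establish the cardinal property $L^{(\vect{m})}_{\vect{i}}(\theta^{(m_1)}_{j_1},\vph^{(m_2)}_{j_2})=\delta_{\vect{i}\vect{j}}$ by expanding the indicator function (the paper's Dirac function $\delta_{\vect{j}}$) in the discrete orthogonal basis of Theorem \ref{1507091911} via \eqref{1508201411}, then settle uniqueness by a dimension count making the evaluation map between $\Pim$ and $\funpol(\Immp)$ bijective (the paper works with its inverse $f\mapsto P^{(\vect{m})}_{f}$), and handle the reduced case by the same grouping of pole terms and dimension comparison. The only cosmetic differences are that you make explicit the linear independence of the continuous functions $\polbascont$, $\vect{\gamma}\in\Gam$ (which the paper leaves implicit in asserting $\dim\Pim=m_1m_2$), and that you label the evaluation of the orthogonal expansion ``Parseval''; neither changes the substance.
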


\begin{proof}
For $\vect{j} \in \Immp$, let $\delta_{\vect{j}}(\vect{i}) = \delta_{\vect{i}\vect{j}}$ be the Dirac function on $\Immp$. We consider the system $\delta_{\vect{j}}$, $\vect{j} \in \Immp$,
as an orthogonal basis of the space $\funpol(\Immp)$. By Theorem \ref{1507091911}, $\polbas$, $\vect{\gamma} \in \Gam$, is a second orthogonal basis of $\funpol(\Immp)$
and we can expand the functions $\delta_{\vect{j}}$, $\vect{j} \in \Immp$, as
\[\delta_{\vect{j}}(\vect{i}) = \sum_{\vect{\gamma} \in \Gam} \frac{\langle\;\! \delta_{\vect{j}},\polbas \rangle_{\omega^{(\vect{m})}}}{\|\polbas\|_{\omega^{(\vect{m})}}^2} \polbas(\vect{i}) =
\frac{1}{m_1 m_2} \sum_{\vect{\gamma} \in \Gam} \frac{\polbas(\vect{i}) \overline{\polbas(\vect{j})}}{\|\polbas\|_{\omega^{(\vect{m})}}^2}.\]
Evaluating the Lagrange function $L^{(\vect{m})}_{\vect{j}}$, $\vect{j} \in \Immp$, at the points $(\theta,\vph) = (\theta^{(m_1)}_{i_1}, \vph^{(m_2)}_{i_2})$, $\vect{i} \in \Immp$,
and using the identity \eqref{1508201411}, we obtain
\[L^{(\vect{m})}_{\vect{j}}(\theta^{(m_1)}_{i_1},\vph^{(m_2)}_{i_2}) =
\frac{1}{m_1m_2} \sum_{\vect{\gamma} \in \Gam} \frac{\polbas(\vect{i}) \overline{\polbas(\vect{j})}}{\|\polbas\|_{\omega^{(\vect{m})}}^2}.\]
Thus, $L^{(\vect{m})}_{\vect{j}}(\theta^{(m_1)}_{i_1},\vph^{(m_2)}_{i_2}) = \delta_{\vect{j}}(\vect{i})$ and for $f \in \funpol(\Immp)$ the function
$P^{(\vect{m})}_{f}$ satisfies the interpolation condition $\eqref{1508220011}$. Furthermore, the mapping $f \to P^{(\vect{m})}_{f}$ is
an injective homomorphism from $\funpol(\Immp)$ into the space $\Pim$. Since the dimension $\funpol(\Immp)$ coincides with the dimension of $\Pim$ this homomorphism is indeed
an automorphism and the functions $L^{(\vect{m})}_{\vect{j}}$, $\vect{j} \in \Immp$, form a basis of $\Pim$. Finally, we see that
for $f$ in the subspace $\funpols(\Immp)$ the corresponding function $P^{(\vect{m})}_{f}$ is a linear combination of the Lagrange functions
$L^{(\vect{m})}_{\mathrm{S},\vect{j}}$, $\vect{j} \in \Imms$. Since the dimensions of the two subspaces coincide, we get also uniqueness here. \qed
\end{proof}

As in the discrete case, we want to establish the same result also for a real valued basis. To this end we define for $\vect{\gamma} \in \Gam$ the functions
\begin{equation*}
\polbascontreal(\theta,\vph) = \left\{ \begin{array}{ll} \cos(\gamma_{1} \theta) \cos (|\gamma_{2}| \vph )
                                        &  \text{if}  \quad \begin{array}{l} \vect{\gamma} \in \Gam \setminus \Gamdown, \gamma_2 \leq 0, \gamma_2 \ \text{even} \quad \text{or}\\
                                                                             \vect{\gamma} \in \Gamdown, \gamma_1 \leq m_1/2, \gamma_2 \ \text{even}, \end{array}  \\[5mm]
                                 \sin(\gamma_{1} \theta) \sin (|\gamma_{2}| \vph )
                                        &  \text{if}  \quad \begin{array}{l} \vect{\gamma} \in \Gam \setminus \Gamdown, \gamma_2 \leq 0, \gamma_2 \ \text{odd} \quad \text{or}\\
                                                                             \vect{\gamma} \in \Gamdown, \gamma_1 \leq m_1/2, \gamma_2 \ \text{odd}, \end{array}  \\[5mm]
                                 \cos(\gamma_{1} \theta) \sin (\gamma_{2} \vph )
                                        &  \text{if}  \quad \begin{array}{l} \vect{\gamma} \in \Gam \setminus \Gamdown, \gamma_2 > 0, \gamma_2 \ \text{even}\quad \text{or}\\
                                                                             \vect{\gamma} \in \Gamdown, \gamma_1 > m_1/2, \gamma_2 \ \text{even}, \end{array} \\[5mm]
                                 \sin(\gamma_{1} \theta) \cos (\gamma_{2} \vph )
                                        &  \text{if}  \quad \begin{array}{l} \vect{\gamma} \in \Gam \setminus \Gamdown, \gamma_2 > 0, \gamma_2 \ \text{odd}\quad \text{or}\\
                                                                             \vect{\gamma} \in \Gamdown, \gamma_1 > m_1/2, \gamma_2 \ \text{odd}. \end{array}
                                      \end{array} \right.
\end{equation*}
Evaluating the functions $\polbascontreal$ at the spherical coordinates $(\theta^{(m_1)}_{i_1}, \vph^{(m_2)}_{i_2})$ and comparing it with the definition given in \eqref{1702291124},
we obtain the identity
\begin{equation*}
\polbascontreal(\theta^{(m_1)}_{i_1}, \vph^{(m_2)}_{i_2}) = \polbasreal(\vect{i}) \qquad \text{ for $\vect{\gamma}\in\Gam$ and $\vect{i}\in \Immp$}.
\end{equation*}
Based on our experience with the complex valued basis, it makes sense to introduce the interpolation spaces as
\[\Pimreal = \mathrm{span} \left\{\, \polbascontreal \,\left|\, \vect{\gamma} \in \Gam \right. \right\}\]
and the Lagrange functions $L^{(\vect{m})}_{\mathcal{R},\vect{i}}(\theta,\vph)$ as
\begin{equation*} 
 L^{(\vect{m})}_{\mathcal{R},\vect{i}}(\theta,\vph) = \frac{1}{m_1m_2} \sum_{\vect{\gamma} \in \Gam} \frac{\polbasreal(\vect{i})}{\|\polbasreal\|_{\omega^{(\vect{m})}}^2}
 \polbascontreal(\theta,\vph)
 \quad (\theta,\vph) \in [0,\pi] \times [0,2\pi).
\end{equation*}
The corresponding reduced subspace $\Pim_{\mathcal{R},\mathrm{S}}$ and Lagrange functions $L^{(\vect{m})}_{\mathcal{R},\mathrm{S},\vect{i}}$ are
defined in the same way as in \eqref{eq:186181745} and \eqref{1508220012}, respectively.
In analogy to Theorem \ref{201512131945}, we get the following result.

\begin{theorem}
Let $\vect{m} \in \Nn^2$, $m_2$ be even, and $f\in \funpol(\Imm)$. The interpolation problem \eqref{1508220011} has a unique solution
in the space $\Pimreal$ given by the function
\begin{equation*} 
P^{(\vect{m})}_{\mathcal{R},f}(\theta,\vph) = \sum_{\vect{i} \in \Immp} f({\vect{i}}) \, L^{(\vect{m})}_{\mathcal{R},\vect{i}}(\theta,\vph).
\end{equation*}
The Lagrange functions $L^{(\vect{m})}_{\mathcal{R},\vect{i}}$, $\vect{i} \in \Immp$, form a basis of the vector space $\Pimreal$. \\
If $f\in \funpols(\Immp)$, the interpolation problem \eqref{1508220011} has a solution of the form
\begin{equation*} 
P^{(\vect{m})}_{\mathcal{R},f}(\theta,\vph) = \sum_{\vect{i} \in \Imms} f({\vect{i}}) \, L^{(\vect{m})}_{\mathcal{R},\mathrm{S},\vect{i}}(\theta,\vph).
\end{equation*}
This solution is unique in the subspace $\Pim_{\mathcal{R},\mathrm{S}} \subset \Pimreal$ spanned by the functions $L^{(\vect{m})}_{\mathcal{R},\mathrm{S},\vect{i}}$, $\vect{i} \in \Imms$.
\end{theorem}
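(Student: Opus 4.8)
The plan is to transcribe the proof of Theorem \ref{201512131945} almost verbatim, replacing the complex discrete orthogonal basis $\polbas$ by its real counterpart $\polbasreal$, $\vect{\gamma} \in \Gam$, and the continuous functions $\polbascont$ by $\polbascontreal$. The two ingredients that make this work are already available: the theorem at the end of Section \ref{1507091240} guarantees that $\polbasreal$, $\vect{\gamma} \in \Gam$, is a real orthogonal basis of $\funpol(\Immp)$ with the norms recorded in \eqref{1508221826}, and the nodal identity $\polbascontreal(\theta^{(m_1)}_{i_1}, \vph^{(m_2)}_{i_2}) = \polbasreal(\vect{i})$ links the continuous basis to the discrete one. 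Since everything is now real-valued, the complex conjugation in the inner product is immaterial and the bookkeeping simplifies accordingly.

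First I would expand the Dirac functions $\delta_{\vect{j}}$, $\vect{j} \in \Immp$, defined by $\delta_{\vect{j}}(\vect{i}) = \delta_{\vect{i}\vect{j}}$, in the real orthogonal basis $\polbasreal$, obtaining
\[
\delta_{\vect{j}}(\vect{i}) = \frac{1}{m_1 m_2} \sum_{\vect{\gamma} \in \Gam} \frac{\polbasreal(\vect{i})\, \polbasreal(\vect{j})}{\|\polbasreal\|_{\omega^{(\vect{m})}}^2}.
\]
Evaluating $L^{(\vect{m})}_{\mathcal{R},\vect{j}}$ at the node $(\theta^{(m_1)}_{i_1}, \vph^{(m_2)}_{i_2})$ and inserting the nodal identity then yields exactly the right-hand side above, so that $L^{(\vect{m})}_{\mathcal{R},\vect{j}}(\theta^{(m_1)}_{i_1}, \vph^{(m_2)}_{i_2}) = \delta_{\vect{j}}(\vect{i})$. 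Consequently $P^{(\vect{m})}_{\mathcal{R},f}$ satisfies the interpolation condition \eqref{1508220011} for every $f \in \funpol(\Immp)$.

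The existence-and-uniqueness conclusion then follows from the usual dimension count. The map $f \mapsto P^{(\vect{m})}_{\mathcal{R},f}$ is a homomorphism from $\funpol(\Immp)$ into $\Pimreal$, and it is injective because $P^{(\vect{m})}_{\mathcal{R},f} = 0$ forces all nodal values, hence all $f(\vect{i})$, to vanish. Since the nodal restrictions of $\polbascontreal$, $\vect{\gamma} \in \Gam$, coincide with the linearly independent functions $\polbasreal$, the continuous functions $\polbascontreal$ are themselves linearly independent, so $\dim \Pimreal = \#\Gam = m_1 m_2 = \dim \funpol(\Immp)$. The injective homomorphism is therefore an isomorphism, which gives uniqueness of the interpolant and shows that the $L^{(\vect{m})}_{\mathcal{R},\vect{i}}$, $\vect{i} \in \Immp$, form a basis of $\Pimreal$. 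For $f \in \funpols(\Immp)$ one groups the pole terms $i_1 \in \{0,m_1\}$ using the constancy of $f$ there, rewriting the interpolant through the reduced Lagrange functions $L^{(\vect{m})}_{\mathcal{R},\mathrm{S},\vect{i}}$ as in \eqref{1508220012}; uniqueness in $\Pim_{\mathcal{R},\mathrm{S}}$ follows from $\dim \funpols(\Immp) = \#\Imms = \dim \Pim_{\mathcal{R},\mathrm{S}}$.

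Because the argument is a faithful copy of the complex case, I do not expect a genuine obstacle. The one point deserving explicit care is the linear independence of the real functions $\polbascontreal$ --- the case distinction in their definition must be confirmed to yield precisely $\#\Gam$ functions with the correct nodal restrictions --- but this is settled at once by the nodal identity together with the real orthogonality already proven, so the whole proof reduces to reproducing the steps above with $\polbasreal$ and $\polbascontreal$ in place of $\polbas$ and $\polbascont$.
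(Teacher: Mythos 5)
Your proposal is correct and matches the paper exactly: the paper offers no separate proof but states the result ``in analogy to Theorem \ref{201512131945}'', and your argument is precisely that analogy carried out --- replacing $\polbas$, $\polbascont$ by $\polbasreal$, $\polbascontreal$, using the real discrete orthogonality \eqref{1508221826}, the nodal identity, the Dirac-expansion to get the Lagrange property, and the dimension count for uniqueness and the reduction to $\Imms$. Your extra remark on the linear independence of the $\polbascontreal$ via their nodal restrictions is the right justification and is implicit in the paper's complex-case proof as well.
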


\begin{remark}
In the discrete setting both basis systems $\polbas$ and $\polbasreal$, $\vect{\gamma} \in \Gam$  span the same space $\funpol(\Immp)$. This is different in the continuous setup.
Here, we have $\Pim = \Pimreal$ if and only if $m_1$ and $m_2$ are relatively prime. If $m_1$ and $m_2$ are not relatively prime, then the real basis functions $\polbascontreal$
for $\gamma \in \Gamdown$ are linear combinations of complex basis functions $\polbascont$ in which the indices $\gamma$ are contained in both sets $\Gamdown$ and $\Gamup$.
\end{remark}

\section{Implementation of the interpolation scheme} \label{sec:implementation}

The interpolating function $P^{(\vect{m})}_{f}$ can be computed efficiently by using fast Fourier techniques. To this end, we
expand $P^{(\vect{m})}_{f}$ in the basis $\polbascont$ as
\begin{equation} \label{20170303146} P^{(\vect{m})}_{f}(\theta,\vph) = \sum_{\vect{\gamma} \in \Gam} c_{\vect{\gamma}}(f) \polbascont(\theta,\vph).\end{equation}
In this way, once the coefficients $c_{\vect{\gamma}}(f)$ are calculated, it only remains to evaluate the sum in \eqref{20170303146}.
By Theorem \ref{201512131945} and definition $\eqref{1508220009}$ we have the following decomposition:
\[P^{(\vect{m})}_{f}(\theta,\vph) = \sum_{\vect{\gamma} \in \Gam} \frac{1}{\|\polbas\|_{\omega^{(\vect{m})}}^2} \left( \frac{1}{m_1 m_2}\sum_{\vect{i} \in \Immp}
 f({\vect{i}}) \overline{\polbas(\vect{i})} \right) \polbascont(\theta,\vph).\]
Since the functions $\polbascont$ form a basis of $\Pim$, we immediately obtain
\begin{corollary} \label{Cor:112}
For $f\in\funpol(\Immp)$, the  uniquely determined coefficients $c_{\vect{\gamma}}(f)$ in the expansion \eqref{20170303146} are given by
\[c_{\vect{\gamma}}(f)=\ts \frac{1}{\|\polbas\|_{\omega^{(\vect{m})}}^2}\,\ds \langle\;\! f,\polbas \rangle_{\omega^{(\vect{m})}}.\]
\end{corollary}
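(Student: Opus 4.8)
The plan is to exploit the orthogonality of the basis $\polbas$, $\vect{\gamma}\in\Gam$, established in Theorem \ref{1507091911}, together with the defining relation \eqref{1508201411} that links the continuous basis $\polbascont$ to the discrete basis $\polbas$. The key observation is that the coefficients $c_{\vect{\gamma}}(f)$ in \eqref{20170303146} are uniquely determined because, by Theorem \ref{201512131945}, the functions $\polbascont$, $\vect{\gamma}\in\Gam$, form a basis of $\Pim$, and the interpolant $P^{(\vect{m})}_f$ is the unique element of $\Pim$ satisfying \eqref{1508220011}.

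First I would start from the explicit decomposition of $P^{(\vect{m})}_f$ already displayed just before the corollary, namely
\[
P^{(\vect{m})}_{f}(\theta,\vph) = \sum_{\vect{\gamma} \in \Gam} \frac{1}{\|\polbas\|_{\omega^{(\vect{m})}}^2} \left( \frac{1}{m_1 m_2}\sum_{\vect{i} \in \Immp} f({\vect{i}}) \overline{\polbas(\vect{i})} \right) \polbascont(\theta,\vph).
\]
Comparing this term by term with the expansion \eqref{20170303146} and invoking the uniqueness of the coefficients in a basis representation, I would read off immediately
\[
c_{\vect{\gamma}}(f) = \frac{1}{\|\polbas\|_{\omega^{(\vect{m})}}^2} \cdot \frac{1}{m_1 m_2}\sum_{\vect{i} \in \Immp} f({\vect{i}}) \overline{\polbas(\vect{i})}.
\]

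It then remains only to recognize the inner sum as the inner product $\langle f,\polbas \rangle_{\omega^{(\vect{m})}}$. By the definition of the normalized discrete measure $\omega^{(\vect{m})}$ with $\omega^{(\vect{m})}(\{\vect{i}\})=1/(m_1m_2)$, we have $\langle f,\polbas \rangle_{\omega^{(\vect{m})}} = \frac{1}{m_1m_2}\sum_{\vect{i} \in \Immp} f(\vect{i})\,\overline{\polbas(\vect{i})}$, which is exactly the factor appearing above. Substituting this yields the claimed formula $c_{\vect{\gamma}}(f)=\frac{1}{\|\polbas\|_{\omega^{(\vect{m})}}^2}\langle f,\polbas \rangle_{\omega^{(\vect{m})}}$.

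There is no real obstacle here, since all the heavy lifting has been done in Theorem \ref{1507091911} (orthogonality) and Theorem \ref{201512131945} (basis property and uniqueness of the interpolant). The only point requiring care is the justification that the coefficients are \emph{uniquely} determined: this follows because $\polbascont$, $\vect{\gamma}\in\Gam$, is a basis of $\Pim$, so the representation \eqref{20170303146} is unique and the coefficients can be matched directly against the explicit formula. Thus the result is an immediate consequence of the preceding theory, and the corollary is proved.
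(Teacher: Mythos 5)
Your proposal is correct and follows essentially the same route as the paper: the paper likewise obtains the explicit decomposition of $P^{(\vect{m})}_f$ by inserting the definition \eqref{1508220009} of the Lagrange functions into Theorem \ref{201512131945}, and then reads off the coefficients by the uniqueness of the representation in the basis $\polbascont$, $\vect{\gamma}\in\Gam$, of $\Pim$. Your additional remark identifying the inner sum with $\langle f,\polbas\rangle_{\omega^{(\vect{m})}}$ via the normalization $\omega^{(\vect{m})}(\{\vect{i}\})=1/(m_1m_2)$ is exactly the bookkeeping the paper leaves implicit.
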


\paragraph{\bf Calculation of the coefficients $c_{\vect{\gamma}}(f)$}
Based on the formula in Corollary \ref{Cor:112}, the coefficients $c_{\vect{\gamma}}(f)$ can be computed using a two dimensional Fourier transform on the finite abelian group $\Zz / (2 m_1) \times \Zz / (2 m_2)$.
We identify this group with
\[ \Jmm = \{ \vect{i} \in \Zz^2 \ | \ 0 \leq i_1 \leq 2 m_1 -1, \ 0 \leq i_2 \leq 2 m_2 -1 \}. \]
We introduce a flip operator on $\Jmm$ by defining $\vect{i}^* = (2 m_1 - i_1 \mod 2 m_1, i_2 + m_2 \mod 2 m_2)$ for $\vect{i} \in \Jmm$.
Using this operator, we can extend a function $f$ on $\Immp$ to a function $g$ on the whole group $\Jmm$
by setting
\[ g(\vect{i}) = \frac{1}{2 m_1m_2}  \left\{ \begin{array}{rl} f(\vect{i}), \quad  & \text{if}\; \vect{i}\in\Immp,
\rule[-0.65em]{0pt}{1em}\\ f(\vect{i}^*), \quad  & \text{if}\; \vect{i}^*\in\Immp,
\rule[-0.65em]{0pt}{1em}\\
       0, \quad
  & \text{otherwise}. \end{array} \right. \]
The computation of the coefficient $c_{\vect{\gamma}}(f)$ can now be reduced to the calculation of the Fourier transform $\hat{g}$ of $g$ on $\Jmm$ by using the identity
\begin{align}c_{\vect{\gamma}}(f)&=\ts \frac{1}{\|\polbas\|_{\omega^{(\vect{m})}}^2}\,\ds \langle\;\! f,\polbas \rangle_{\omega^{(\vect{m})}}
=\ts \frac{1}{\|\polbas\|_{\omega^{(\vect{m})}}^2}\,\ds \frac{1}{m_1 m_2}\sum_{\vect{i} \in \Immp} f(\vect{i}) \overline{\polbas(\vect{i})} \label{20170304}\\
& = \ts \frac{1}{\|\polbas\|_{\omega^{(\vect{m})}}^2}\,\ds \sum_{\vect{i} \in \Jmm} g(\vect{i}) \mathrm{e}^{-\imath \gamma_{1} i_1 \pi/m_{1} } \mathrm{e}^{-\imath \gamma_{2} i_2 \pi/m_{2} }
  =  \left\{ \begin{array}{rl} 2 \hat{g}(\vect{\gamma}),  & \text{if}\, \vect{\gamma}\in\Gam, \, \gamma_1 \notin \{0,m_1\},
\rule[-0.65em]{0pt}{1em}\\ \hat{g}(\vect{\gamma}),  & \text{if}\,  \vect{\gamma}\in\Gam, \, \gamma_1 \in \{0,m_1\}. \end{array} \right. \notag
\end{align}
The computation of the discrete Fourier transform $\hat{g}(\vect{\gamma}) = \sum_{\vect{i} \in \Jmm} g(\vect{i}) \mathrm{e}^{-\imath \gamma_{1} i_1 \pi/m_{1} } \mathrm{e}^{-\imath \gamma_{2} i_2 \pi/m_{2}}$ can be executed very efficiently in $\Ord(m_1m_2 \log (m_1m_2))$ arithmetic operations using
standard algorithms for the fast Fourier transform. The values for $\|\polbas\|_{\omega^{(\vect{m})}}^2$ are taken from \eqref{1508221825}.

\begin{remark} \label{rem-1}
The invariance of the function $g$ under the flip operator, i.e., $g(\vect{i}^*) = g(\vect{i})$, implies for the Fourier domain the identity
$\hat{g}(\vect{\gamma}) = (-1)^{\gamma_2} \hat{g}(2m_1 - \gamma_1, \gamma_2)$ for all $\vect{\gamma}$ in the dual group (we identify it here also with $\Jmm$).
This glide reflection symmetry of $g$ and the flip operator are already used in the first publications studying 
double Fourier series on the sphere \cite{BoerSteinberg1975,Merilees1973}. In numerical software packages as for instance in Chebfun \cite{DriscollHaleTrefethen2014}, this
symmetry is used to obtain sparse tensor-product approximations of functions on the sphere \cite{TownsendWilberWright2016}. In \cite{TownsendWilberWright2016}, the symmetry of $g$ is called block-mirror-centrosymmetric (BMC) structure.
\end{remark}

\paragraph{\bf Calculation of the real coefficients $c_{\mathcal{R},\vect{\gamma}}(f)$}
Also for the real valued basis $\polbascontreal$ we get an expansion for the interpolating polynomial $P^{(\vect{m})}_{\mathcal{R},f}$ of the form
\begin{equation*} P^{(\vect{m})}_{\mathcal{R},f}(\theta,\vph) = \sum_{\vect{\gamma} \in \Gam} c_{\mathcal{R},\vect{\gamma}}(f) \polbascontreal(\theta,\vph),\end{equation*}
in which the expansion coefficients $c_{\mathcal{R},\vect{\gamma}}(f)$ are given by
\[c_{\mathcal{R},\vect{\gamma}}(f)=\ts \frac{1}{\|\polbasreal\|_{\omega^{(\vect{m})}}^2}\,\ds \langle\;\! f,\polbasreal \rangle_{\omega^{(\vect{m})}}.\]
The calculation of the expansion coefficients can be conducted efficiently using the formula
\begin{equation*}
c_{\mathcal{R},\vect{\gamma}}(f) = \frac{1}{\|\polbasreal\|_{\omega^{(\vect{m})}}^2} \left\{ \begin{array}{ll}
\Re \hat{g}(\vect{\gamma})
                                        & \text{if}  \quad \begin{array}{l} \vect{\gamma} \in \Gam \setminus \Gamdown, \gamma_2 \leq 0, \quad \text{or}\\
                                                                             \vect{\gamma} \in \Gamdown, \gamma_1 \leq m_1/2, \end{array}  \\[5mm]
- \Im \hat{g}(\vect{\gamma})
                                        & \text{if}  \quad \begin{array}{l} \vect{\gamma} \in \Gam \setminus \Gamdown, \gamma_2 > 0, \quad \text{or}\\
                                                                             \vect{\gamma} \in \Gamdown, \gamma_1 > m_1/2. \end{array}
\end{array} \right.
\end{equation*}
This formula can be verified as in \eqref{20170304} using the real basis \eqref{1702291124} instead of the complex functions $\polbas$.
The values $\|\polbasreal\|_{\omega^{(\vect{m})}}^2$ are explicitly known from \eqref{1508221826}.

\paragraph{\bf Calculation of averaged interpolants}
Instead of using the expansion \eqref{20170303146}, it is sometimes more convenient to implement the more symmetric expansion
\begin{equation*} {P}^{(\vect{m})}_{\mathcal{A},f}(\theta,\vph) = \sum_{\vect{\gamma} \in \Gamsup} c_{\mathcal{A},\vect{\gamma}}(f) \polbascont(\theta,\vph),\end{equation*}
in which the coefficients $c_{\mathcal{A},\vect{\gamma}}(f)$ are given by
\[ c_{\mathcal{A},\vect{\gamma}}(f) = \left\{ \begin{array}{ll} c_{\vect{\gamma}}(f)/2 & \text{if} \ \vect{\gamma} \in \Gamup \cup \Gamdown, \\
c_{\vect{\gamma}}(f) & \text{for all other $\vect{\gamma} \in \Gamsup$}.   \end{array}\right.\]
In this way, it is not necessary to make a choice between $\Gamup$ and $\Gamdown$ in order to define the interpolation space. For
$\gamma \in \Gamup$ we have $\chi^{\vect{m}}_{\vect{\gamma}} = (-1)^{\gamma_2} \chi^{\vect{m}}_{(m_1 - \gamma_1,-m_2 + \gamma_2)}$ on $\Immp$,
and therefore also $c_{\vect{\gamma}}(f) = (-1)^{\gamma_2} c_{(m_1 - \gamma_1, - m_2 + \gamma_2)}(f)$. This guarantees that
${P}^{(\vect{m})}_{\mathcal{A},f}$ is also a solution of the interpolation problem \eqref{1508220011}, although in a different space than $\Pim$. A
similar strategy is of course also possible for the real valued basis $\polbascontreal$. Averaged interpolation spaces of this type
were originally used for the Morrow-Patterson-Xu points in \cite{Harris2010,Xu1996}. A more detailed discussion of this averaging related
to multivariate interpolation on Lissajous-Chebyshev nodes can be found in \cite{DenckerErb2017a}.

\paragraph{\bf The inverse transform}
From the coefficients $c_{\vect{\gamma}}(f)$, $\vect{\gamma} \in \Gam$, the values $f \in \mathcal{L}(\Immp)$ can be recovered efficiently by a second discrete Fourier transform. We give a short description of this inverse transform. Using the interpolation condition \eqref{1508220011} and \eqref{1508201411}, we have
\begin{equation*}
 f({\vect{i}}) = P^{(\vect{m})}_f (\theta^{(m_1)}_{i_1}, \vph^{(m_2)}_{i_2}) = 
 \sum_{\vect{\gamma} \in \Gam} c_{\vect{\gamma}}(f) \polbas(\vect{i}) \quad \text{for all}\quad \vect{i} \in \Immp.
\end{equation*}
Defining the discrete function $h$ on the (dual) group $\Jmm$ as 
\[ h(\vect{\gamma}) =  \left\{ \begin{array}{rl} 2 c_{\vect{\gamma}}(f), \quad  & \text{if}\; \vect{\gamma}\in\Gam, \, \gamma_1 \notin \{0,m_1\},
\rule[-0.65em]{0pt}{1em}\\
(-1)^{\gamma_2} 2 c_{\vect{\gamma}}(f), \quad  & \text{if}\; (2m_1-\gamma_1,\gamma_2)\in\Gam, \, \gamma_1 \notin \{0,m_1\},
\rule[-0.65em]{0pt}{1em}\\ 
 c_{\vect{\gamma}}(f), \quad  & \text{if}\; \vect{\gamma}\in\Gam, \, \gamma_1 \in \{0,m_1\},
\rule[-0.65em]{0pt}{1em}\\ 
       0, \quad
  & \text{otherwise}, \end{array} \right. \]
we obtain from the relation above and the definition \eqref{A1508291531} of the basis functions $\polbas$ the following discrete Fourier sum:
\begin{equation*}
 f({\vect{i}}) =  \sum_{\vect{\gamma} \in \Jmm} h(\vect{\gamma}) \mathrm{e}^{\imath \gamma_{1} i_1 \pi/m_{1} } \mathrm{e}^{\imath \gamma_{2} i_2 \pi/m_{2} } 
 \quad \text{for all}\quad \vect{i} \in \Immp.
\end{equation*} 
In this way, the function $f \in \mathcal{L}(\Immp)$ can be recovered by applying a discrete adjoint Fourier transform to $h$ on $\Jmm$. As for 
the computation of the coefficients $c_{\vect{\gamma}}(f)$,
this adjoint transform can be executed efficiently in $\Ord(m_1m_2 \log (m_1m_2))$ arithmetic operations. 
Note that by \eqref{20170304} the function $h$ corresponds
to $\hat{g}$ on $\Gam$ and $\{\vect{\gamma} \in \Jmm \ | \ (2m_1-\gamma_1,\gamma_2) \in \Gam\}$, but in general not on the entire set $\Jmm$. 

\section{Numerical condition and convergence of the interpolation scheme} \label{sec:convergence}

We provide a mathematical description of central properties of the given interpolation scheme, as its numerical condition number, its convergence rates, and the behavior at the poles of $\Sd$. The interpolation spaces $\Pim$ and $\Pimreal$
are spanned by a double Fourier basis with a glide-reflection symmetry. Our strategy is therefore to use the theory of multivariate Fourier series to derive the pursued properties. 

We consider interpolating functions $P^{(\vect{m})}_{f}$ in which the data $f$ is given by the samples of a continuous function on the sphere. In particular, if $\ff(\theta,\vph)$ describes a continuous function on $\Sd$ in spherical coordinates, we have
\begin{equation} \label{eq:186181822}
f(\vect{i}) = \ff(\theta^{(m_1)}_{i_1}, \vph^{(m_2)}_{i_2}) \quad \text{for}\quad \vect{i} \in \Immp.
\end{equation}
Clearly, $f \in \funpols(\Immp)$ and Theorem \ref{201512131945} gives a unique interpolant $P^{(\vect{m})}_{f}$ in $\Pims \subset \Pim$. 

\paragraph{\bf Behavior at the poles of the sphere} We can describe a continuous function $\ff$ on $\Sd$ as a continuous function in spherical coordinates 
$(\theta,\vph) \in [0,\pi] \times [0,2\pi]$ by using topological identifications at the boundaries. The corresponding function space is given as 
\[ C(\Sd) = \left\{\ff \in C([0,\pi]\times[0,2\pi]) \ \left| \ \begin{array}{lll} \mathrm{(i)} & \ff(\theta,0) = \ff(\theta,2\pi), & 0 \leq \theta \leq \pi, \\
                                                                       \mathrm{(ii)} & \ff(0,\vph_1) = \ff(0,\vph_2),   & 0 \leq \vph_1,\vph_2 \leq 2 \pi, \\
                                                                       \mathrm{(iii)} &\ff(\pi,\vph_1) = \ff(\pi,\vph_2),   & 0 \leq \vph_1,\vph_2 \leq 2 \pi. \end{array}
                                                                       \right. \right\} \]
The parity-modified basis functions $\polbascont$, $\vect{\gamma} \in \Gam$, are in general not contained in $C(\Sd)$. While $\polbascont \in C([0,\pi]\times[0,2\pi])$
and the periodicity $\mathrm{(i)}$ are satisfied, the pole conditions $\mathrm{(ii)}$ and $\mathrm{(iii)}$ are only satisfied if $\gamma_2$ is odd. Also 
the interpolant $P^{(\vect{m})}_{f}$ does in general not satisfy the properties $\mathrm{(ii)}$ and $\mathrm{(iii)}$ and is therefore not necessarily continuous at 
the poles of $\Sd$. The condition $P^{(\vect{m})}_{f} \in C(\Sd)$ can be guaranteed only for particular continuous functions $\ff$. An important example is 
the space $\Pim \cap C(\Sd)$. Since $P^{(\vect{m})}_{f}$ is a projection into $\Pim$, we get for $\ff \in C(\Sd) \cap \Pim$ the identity $P^{(\vect{m})}_{f} = \ff$ 
and, thus, $P^{(\vect{m})}_{f} \in C(\Sd)$. 

In the next part we will see that the discontinuities of $P^{(\vect{m})}_{f}$ at the poles do not affect 
the global convergence of the interpolation scheme if the function $\ff$ is sufficiently smooth. This guarantees that the interpolant $P^{(\vect{m})}_{f}$ and also its derivatives will approximately satisfy the conditions $\mathrm{(ii)}$ and $\mathrm{(iii)}$ with high accuracy when the frequencies $m_1$ and $m_2$ get large. In \cite{Boyd1978}, such a property at the poles is called a natural boundary condition. Although such a natural condition is sufficient for a lot of applications, there are cases in which the described singularities at the poles result in problems. This pole problem related to the usage of the parity-modified double Fourier basis as well as possible solutions are discussed in \cite{Boyd1978,Orszag1974}.

\paragraph{\bf The Lebesgue constant}
The operator norm 
\[\Lambda^{(\vect{m})} = \sup_{\|\ff\|_{\infty} \leq 1} \|P^{(\vect{m})}_{f}\|_{\infty}, \quad \text{with} \;\| \ff \|_{\infty} = \sup_{(\theta,\vph)} |\ff(\theta,\vph)|, \]
is usually referred to as Lebesgue constant or as absolute condition number of the interpolation problem \eqref{1508220011}.
It is an upper bound for the propagation of the error in the uniform norm when constructing the interpolant $P^{(\vect{m})}_{f}$ from a continuous function $\ff$. 
\begin{theorem} \label{thm:lebesgueconstant}
The Lebesgue constant $\Lambda^{(\vect{m})}$ is bounded by
\[ \Lambda^{(\vect{m})} \leq C_{\Lambda} \ln (m_1+1) \ln (m_2+1).\] 
with a constant $C_{\Lambda}$ independent of $\vect{m}$. 
\end{theorem}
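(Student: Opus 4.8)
The plan is to reduce the estimate of $\Lambda^{(\vect{m})}$ to two one-dimensional problems by exploiting the product structure hidden in the parity-modified double Fourier basis. First I would replace the operator norm by the Lebesgue function. Since $P^{(\vect{m})}_{f} = \sum_{\vect{i}\in\Immp} f(\vect{i})\,L^{(\vect{m})}_{\vect{i}}$ depends linearly on the samples and any bounded node data can be realized by a continuous function $\ff$ on $\Sd$ with $\|\ff\|_{\infty}\le 1$, the standard duality argument yields
\[ \Lambda^{(\vect{m})} = \sup_{(\theta,\vph)} \ \sum_{\vect{i}\in\Immp} \bigl| L^{(\vect{m})}_{\vect{i}}(\theta,\vph) \bigr| . \]
For data $f\in\funpols(\Immp)$ one works with the reduced functions $L^{(\vect{m})}_{\mathrm{S},\vect{i}}$ of \eqref{1508220012}, but since these are obtained by grouping the polar Lagrange functions, the triangle inequality gives $\sum_{\vect{i}\in\Imms}|L^{(\vect{m})}_{\mathrm{S},\vect{i}}| \le \sum_{\vect{i}\in\Immp}|L^{(\vect{m})}_{\vect{i}}|$, so it suffices to bound the Lebesgue function displayed above.

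Second, I would pass to the reproducing kernel
\[ K^{(\vect{m})}\bigl((\theta,\vph),(\theta',\vph')\bigr) = \sum_{\vect{\gamma}\in\Gam} \frac{\polbascont(\theta,\vph)\,\overline{\polbascont(\theta',\vph')}}{\|\polbas\|_{\omega^{(\vect{m})}}^2}. \]
By the identity \eqref{1508201411} and the definition \eqref{1508220009} one has $L^{(\vect{m})}_{\vect{i}}(\theta,\vph) = (m_1m_2)^{-1} K^{(\vect{m})}((\theta,\vph),(\theta^{(m_1)}_{i_1},\vph^{(m_2)}_{i_2}))$, so the whole task reduces to a uniform bound on $\sum_{\vect{i}\in\Immp} |K^{(\vect{m})}((\theta,\vph),(\theta^{(m_1)}_{i_1},\vph^{(m_2)}_{i_2}))|$.

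The crux, and the step I expect to be the main obstacle, is that the spectral index set $\Gam$ is a \emph{total-degree} triangle $\gamma_1/m_1 + |\gamma_2|/m_2 \le 1$ rather than a rectangle, so $K^{(\vect{m})}$ does not factor as a single tensor product. The strategy here is to expand $\polbascont(\theta,\vph)\,\overline{\polbascont(\theta',\vph')}$ by the product-to-sum formulas \eqref{1507222159} and \eqref{1507222159B} and then to perform an Abel summation along the anti-diagonals $\gamma_1/m_1+|\gamma_2|/m_2 = \text{const}$. This telescopes the triangular sum into a short, $\vect{m}$-independent combination of genuine tensor products, each of the form (univariate latitudinal kernel) $\times$ (univariate longitudinal kernel); the doubled-norm polar rows $\gamma_1\in\{0,m_1\}$ from \eqref{1508221825}, the removal of $\Gamup$ in the definition \eqref{1508222042B}, and the parity constraint $i_1+i_2$ even all enter only as $\Ord(1)$ corrections. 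This is precisely the spherical analogue of the kernel splitting used for the Padua and Lissajous--Chebyshev points.

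Finally, I would estimate the two univariate factors by classical results. The longitudinal factor is a Dirichlet kernel associated with trigonometric interpolation on the $2m_2$ equispaced nodes $\vph^{(m_2)}_{i_2}$, whose Lebesgue sum obeys $\sum_{i_2}|\,\cdot\,| \le C_1\ln(m_2+1)$. The latitudinal factor is, under the substitution $x=\cos\theta$, the Clenshaw--Curtis (Chebyshev) interpolation kernel on the $m_1+1$ Gauss--Lobatto nodes $\cos\theta^{(m_1)}_{i_1}$, whose Lebesgue sum is bounded by $C_2\ln(m_1+1)$. Since the kernel decomposition of the previous step expresses $\sum_{\vect{i}}|K^{(\vect{m})}|$ as a finite combination of products of these two univariate Lebesgue sums, multiplying the bounds and absorbing the finitely many constants gives $\Lambda^{(\vect{m})} \le C_\Lambda\ln(m_1+1)\ln(m_2+1)$ with $C_\Lambda$ independent of $\vect{m}$.
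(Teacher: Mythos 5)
Your first two reductions are essentially sound: bounding $\Lambda^{(\vect{m})}$ by the Lebesgue function $\sup_{(\theta,\vph)}\sum_{\vect{i}\in\Immp}|L^{(\vect{m})}_{\vect{i}}(\theta,\vph)|$ and passing to the reproducing kernel is a legitimate start (though your claimed \emph{equality} is slightly off, since data sampled from a continuous $\ff$ must lie in $\funpols(\Immp)$ --- all polar indices carry the same value, so arbitrary sign patterns on $\Immp$ are not realizable; only the inequality $\leq$ holds, which is all you need). The genuine gap is your third step. Abel summation along the anti-diagonals does \emph{not} telescope the triangular sum into an $\vect{m}$-independent combination of tensor products. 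Writing the kernel as $\sum_{\gamma_1=0}^{m_1} e^{\imath\gamma_1\theta}\,D_{K(\gamma_1)}(\vph)$ with $K(\gamma_1)\approx m_2(1-\gamma_1/m_1)$ and summing by parts produces the terms $A_{\gamma_1}(\theta)\,\bigl(D_{K(\gamma_1)}-D_{K(\gamma_1+1)}\bigr)(\vph)$, where $A_{\gamma_1}$ is a univariate Dirichlet kernel and the difference is a modulated Dirichlet kernel of width about $m_2/m_1$. Estimating each term separately gives a total of order $m_1\ln(m_1)\ln(2+m_2/m_1)$, far worse than $\ln m_1\ln m_2$: the number of terms grows with $m_1$, and no bounded tensor-product decomposition emerges. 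The $\ln\times\ln$ bound for a total-degree (triangular) spectral set rests on cancellation \emph{between} the anti-diagonal layers and is a genuinely delicate result; even in the Padua-point analogy you invoke, the known $\Ord(\log^2)$ proof uses Xu's compact kernel formula, which is a ratio of trigonometric expressions, not a tensor product, so that analogy does not supply the factorization either. Consequently your final step --- multiplying a longitudinal $C_1\ln(m_2+1)$ by a latitudinal $C_2\ln(m_1+1)$ --- has nothing to multiply.

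The paper takes a different route precisely to avoid this obstacle. It rewrites $P^{(\vect{m})}_{f}$, using the glide-reflection symmetry $\hat{g}(\vect{\gamma})=(-1)^{\gamma_2}\hat{g}(2m_1-\gamma_1,\gamma_2)$, as a plain exponential sum over the symmetrized set $\Gamstar=\{\vect{\gamma}\in\Zz^2 \mid (|\gamma_1|,\gamma_2)\in\Gam\}$ (minus one cosine term), then bounds the resulting discrete node sum by the continuous integral
\begin{equation*}
\int_0^{2\pi}\!\!\!\int_0^{2\pi}\Bigl|\sum_{\vect{\gamma}\in\Gamstar} \mathrm{e}^{\imath(\gamma_1\theta+\gamma_2\vph)}\Bigr|\,\Dx{\theta}\,\Dx{\vph}
\end{equation*}
via a twofold Marcinkiewicz--Zygmund inequality, and finally cites the result of \cite{DEKL2017} that the Fourier--Lebesgue constant of such polyhedral spectral sets is $\Ord(\ln(m_1+1)\ln(m_2+1))$ (handling the row $\{(0,\gamma_2)\}$ separately, which costs only an extra $\Ord(\ln(m_2+1))$). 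If you want a self-contained argument, you would need to reprove that polyhedral Fourier--Lebesgue bound; it cannot be replaced by the elementary telescoping you propose.
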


\begin{proof}
We use the representations \eqref{20170303146} and \eqref{20170304} to rewrite $P^{(\vect{m})}_{f}$
in terms of a trigonometric sum. Using the convention $\hat{g}(-\gamma_1,\gamma_2) = \hat{g}(2 m_1 - \gamma_1,\gamma_2)$ and 
the glide-reflection symmetry $\hat{g}(\vect{\gamma}) = (-1)^{\gamma_2} \hat{g}(2 m_1 - \gamma_1, \gamma_2)$ of $g$, we get
\begin{align*} P^{(\vect{m})}_{f}(\theta,\vph) &= \sum_{\vect{\gamma} \in \Gam} \frac{\hat{g}(\vect{\gamma})}{\|\polbas\|_{\omega^{(\vect{m})}}^2} \polbascont(\theta,\vph)
= \sum_{\vect{\gamma} \in \Gamstar} \hat{g}(\vect{\gamma}) \mathrm{e}^{\imath ( \gamma_{1} \theta + \gamma_2 \vph) } - \hat{g}(m_1,0) \cos(m_1 \theta),
\end{align*}
where $\Gamstar = \{ \vect{\gamma} \in \Zz^2 \ | \ (|\gamma_1|, \gamma_2) \in \Gam \ \}$ is the symmetric extension of $\Gam$ from $\Nn \times \Zz$ into
$\Zz^2$. For the operator norm $\Lambda^{(\vect{m})} = \sup_{\|\ff\|_{\infty} \leq 1} \|P^{(\vect{m})}_{f}\|_{\infty}$ we get in this way the estimates
\begin{align*} 
\Lambda^{(\vect{m})} & \leq \sup_{\|\ff\|_{\infty} \leq 1} \sup_{(\theta,\vph)} \left| 
\sum_{\vect{\gamma} \in \Gamstar} \sum_{\vect{i} \in \Jmm} g(\vect{i}) \mathrm{e}^{-\imath \gamma_{1} (i_1 \pi/m_{1} - \theta) } 
\mathrm{e}^{-\imath \gamma_{2} (i_2 \pi/m_{2}-\vph) }\right| + 1 \\
&\leq  \sup_{(\theta,\vph)} \frac{1}{2 m_1 m_2} \sum_{\vect{i} \in \Jmm} \left| 
\sum_{\vect{\gamma} \in \Gamstar}  \mathrm{e}^{-\imath \gamma_{1} (i_1 \pi/m_{1} - \theta) } 
\mathrm{e}^{-\imath \gamma_{2} (i_2 \pi/m_{2}-\vph) }\right| + 1  \\
&\leq C \int_0^{2\pi} \!\!\! \int_0^{2\pi} \left| 
\sum_{\vect{\gamma} \in \Gamstar}  \mathrm{e}^{\imath (\gamma_{1} \theta + \gamma_{2} \vph) }\right| \Dx{\theta} \Dx{\vph} + 1.
\end{align*}
The last transition from a discrete sum to a continuous double integral with a constant $C >0$ independent of $\vect{m}$ is a twofold application
of a Marcinkiewicz-Zygmund inequality, see \cite[X, Theorem 7.10]{Zygmund}. The double integral in the last line is known as Fourier-Lebesgue constant of the set $\Gamstar$. Taking apart a missing subset $\{(0,\gamma_2) \ | \ |\gamma_2| \leq m_2, \ \text{$\gamma_2$ odd}\}$, the Fourier-Lebesgue 
constants of such sets were studied in \cite{DEKL2017}. From the derivations in \cite[Section 2]{DEKL2017} (the Lebesgue constant
of the missing set is bounded by $C \ln (m_2 + 1)$), we get 
\[ \int_0^{2\pi} \!\!\! \int_0^{2\pi} \left| \sum_{\vect{\gamma} \in \Gamstar}  \mathrm{e}^{\imath (\gamma_{1} \theta + \gamma_{2} \vph) }\right| \Dx{\theta} \Dx{\vph} \leq C' \ln (m_1+1) \ln (m_2+1),\]
and, thus, the statement of the theorem. \qed
\end{proof}

\paragraph{\bf Uniform convergence of the interpolation scheme}

For a continuous function $\ff$ in spherical coordinates, we denote by $P^*$ the best approximation in the space $\Pim$ given by
$ \|\ff - P^*\|_\infty = \min_{P \in \Pim} \|\ff - P\|_\infty$. Using the fact that the interpolation operator $\ff \to P^{(\vect{m})}_{f}$ 
reproduces $P^* \in \Pim$, together with the bound in Theorem \ref{thm:lebesgueconstant}, we obtain
\begin{align*} \|\ff - P^{(\vect{m})}_{f} \|_\infty &\leq \|\ff - P^* \|_\infty + \| P^* - P^{(\vect{m})}_{f} \|_\infty \\ & \leq (\Lambda^{(\vect{m})} + 1) \|f - P^* \|_\infty  
= (C_{\Lambda}+1) \ln (m_1+1) \ln (m_2+1)  \|\ff - P^* \|_\infty.
\end{align*}
If $\ff$ is $s$ times continuously differentiable on the sphere, the best error $\|\ff - P^* \|_\infty$ can be estimated using a multivariate 
version of Jackson's inequality for trigonometric functions, as for instance described in \cite[Section 5.3]{Timan1960}.  
As a consequence, we obtain the error estimate
\begin{equation} \label{eq:1806211617} \|\ff - P^{(\vect{m})}_{f}\|_\infty \leq C_{\ff,s} \ln(m_1+1) \ln(m_2+1) \left( \frac{1}{m_1^s} + \frac{1}{m_2^s}\right),\end{equation}
with a constant $C_{\ff,s}$ that depends on $\ff$ and the smoothness $s$ but not on $\vect{m}$. 
This kind of error estimate is typical for a multitude of spectral interpolation and approximation methods and yields a fast uniform convergence of
the interpolant provided the original function $\ff$ is smooth. For multivariate polynomial
interpolation on Lissajous nodes in the hypercube $[-1,1]^{\mathsf{d}}$ similar derivations can, for instance, be found in \cite{DEKL2017,Erb2015}. 
For a tensor product spectral collocation scheme on the sphere $\Sd$, a corresponding result is provided in \cite{Ganesh1998}.

\section{Clenshaw-Curtis quadrature formula} \label{sec:cc}
Using the expansion \eqref{20170303146}, we can easily derive a Clenshaw-Curtis type interpolatory quadrature formula on the sphere
based on function evaluations on $\LSm$.
In spherical coordinates the area element on the sphere $\Sd$ is given by $\sin \theta \, \mathrm{d} \theta \mathrm{d} \vph$. Then,
the tensor product structure of the basis functions $\polbascont$ yields
\[ \frac{1}{4\pi} \int_0^{2\pi} \int_0^{\pi} \polbascont(\theta,\vph) \sin \theta \, \mathrm{d} \theta \mathrm{d} \vph = \left\{ \begin{array}{ll}
 \frac{1}{1-\gamma_1^2} & \text{if $\gamma_2 = 0$, $\gamma_1$ even,} \\ 0 & \text{otherwise}.\end{array} \right.\]
Here, we used the fact that $\int_{0}^{2\pi} e^{\imath \gamma_2 \vph} \mathrm{d} \vph = 2 \pi \delta_{\gamma_2,0}$ and that
$\int_0^{\pi} \cos( \gamma_1 \theta) \sin \theta \mathrm{d} \theta = \frac{2}{1+\gamma_1^2}$ if $\gamma_1$ is even and zero otherwise.
For the interpolating function $P^{(\vect{m})}_{f}$ with the expansion \eqref{20170303146} we therefore obtain the formula
\[ \frac{1}{4\pi} \int_0^{2\pi} \int_0^{\pi} P^{(\vect{m})}_{f}(\theta,\vph) \sin \theta \, \mathrm{d} \theta \mathrm{d} \vph = \sum_{k = 0}^{\lfloor
m_1 /2 \rfloor} \frac{c_{(2k,0)}(f)}{1-4 k^2}.  \]
The coefficients $c_{(2k,0)}(f)$ on the right hand side depend only on the values $f(\vect{i})$, $\vect{i} \in \Imms$,
which, by \eqref{eq:186181822}, are linked to function values at the Lissajous nodes $\LSm$. This formula can therefore be considered
as a Clenshaw-Curtis type quadrature rule on $\Sd$ at the nodes $\LSm$. By construction, it is
an exact quadrature rule for all functions in $\Pim$. Since $c_{(k,0)}(f) = c_{\mathcal{R},(k,0)}(f)$, the same formula
holds also true with $P^{(\vect{m})}_{\mathcal{R},f}$ as an interpolating function.

\section{Application to rotation estimation on the sphere} \label{sec:numerics}

As a final application we show how the interpolation scheme presented in this manuscript can be used to estimate the rotation
of a function on the sphere based on sample values at the nodes $\LSm$.
The algorithm to estimate the Euler angles $\vect{\beta}= (\beta_1, \beta_2, \beta_3)$ of the rotation follows the scheme presented in \cite[Section 4.1]{Ullisch2012}.
Let $\ff: \Sd \to \Rr$ denote the non-rotated function and $\ff_{\mathrm{rot}}(\vect{x}) = \ff(R_{\vect{\beta}} \vect{x})$ the rotated function on the sphere
where $R_{\vect{\beta}}$ denotes the rotation matrix determined by the three Euler angles $\vect{\beta}$. To estimate $\vect{\beta}$, we consider only the data values
$f(\vect{i}) = \ff(\vect{x}_{\vect{i}}^{(\vect{m})})$ and $f_{\mathrm{rot}}(\vect{i})=\ff(R_{\vect{\beta}} \vect{x}_{\vect{i}}^{(\vect{m})})$ measured along the spherical Lissajous curve $\vect{\ell}^{(\vect{m})}_0$.
As an interpolant for the data $f$ on the unit sphere, we use the function $P_{f}^{(\vect{m})}$ (in Cartesian coordinates). In order to obtain an estimate for the Euler angles $\vect{\beta}$ we solve the non-linear least squares problem
\begin{equation} \label{eq:leastsquares} 
\sum_{\vect{i} \in \Imms} |f_{\mathrm{rot}}(\vect{i}) - P_{f}^{(\vect{m})}(R_{\vect{\beta}} \, \vect{x}_{\vect{i}}^{(\vect{m})}  )|^2 = \mathrm{min}. \end{equation}

\begin{figure}[htb]
	\centering
	\subfigure[Original function $\ff$ given in \eqref{eq:testfunction}.
	]{\includegraphics[scale=0.4]{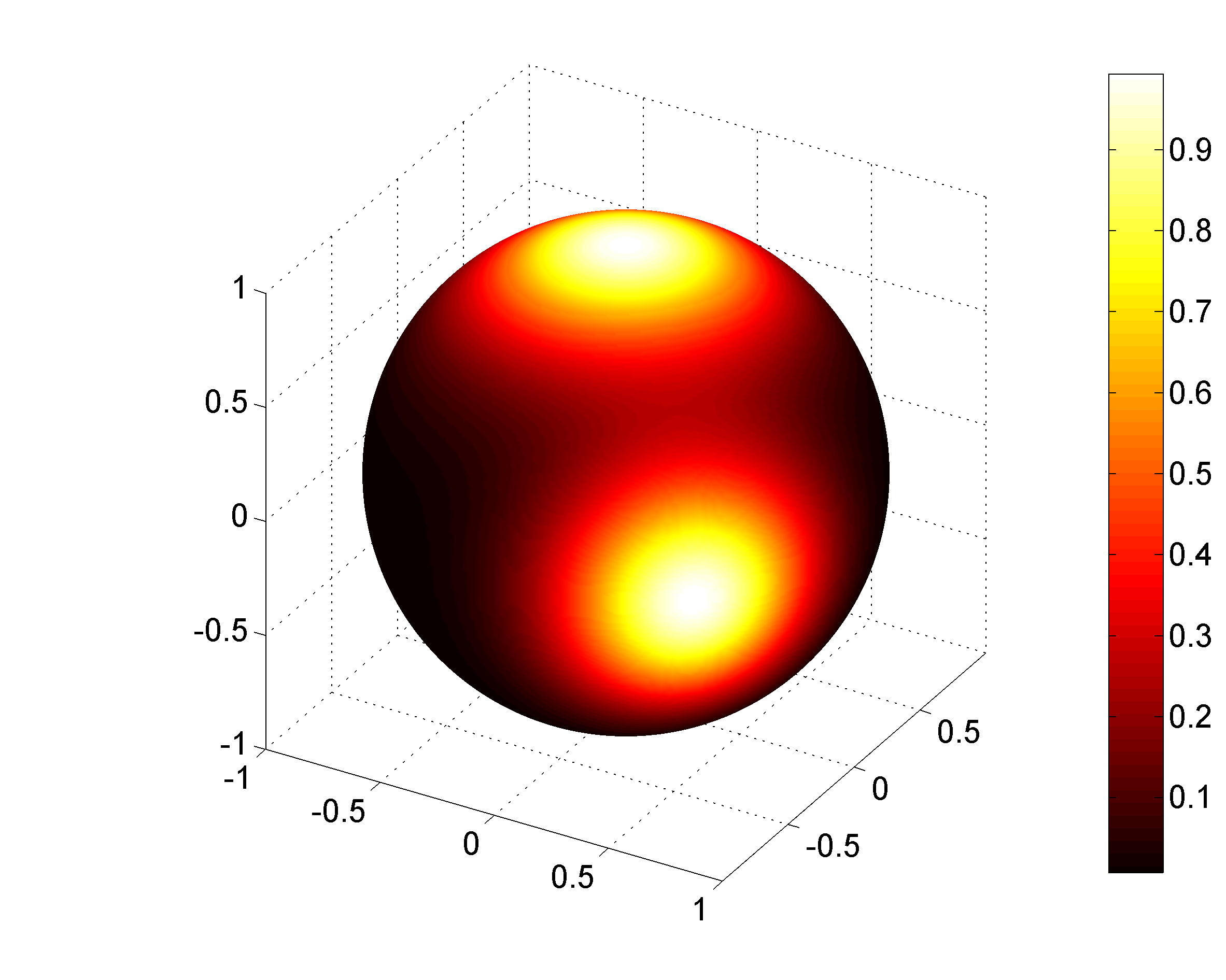}}
	\hfill	
	\subfigure[Rotated function $\ff_\mathrm{rot}$ with $\vect{\beta} = (1.4,0.2,0.9)$.
	]{\includegraphics[scale=0.4]{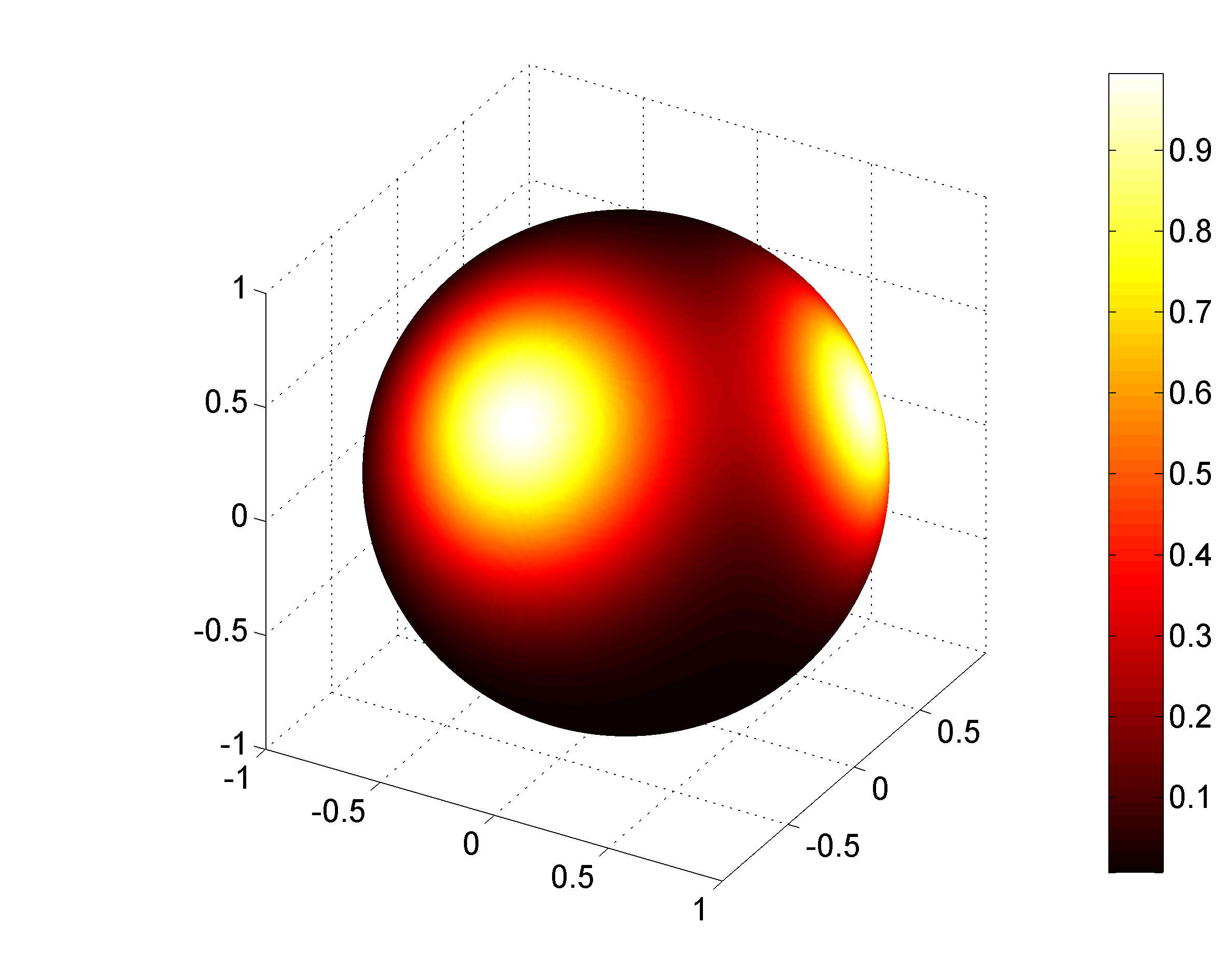}}
  	\caption{Rotation estimation on the sphere based on sample measurements on the nodes $\LSm$.
  	} \label{fig:LS-4}
\end{figure}

In the example given in Figure \ref{fig:LS-4} we used a linear combination of two Gaussians
\begin{equation} \label{eq:testfunction}
\ff(\vect{x}) = e^{-3(x^2 + y^2 + (z-1)^2)}
+e^{-4 ((x-1/\sqrt{2})^2 + (y+1/\sqrt{2})^2+z^2)}
\end{equation}
as a test function. As underlying Lissajous curve we chose $\vect{\ell}_{0}^{(15,16)}$. The non-linear least squares problem \eqref{eq:leastsquares} was
solved iteratively with a damped Gauss-Newton scheme.
With initial vector $\vect{\beta}_0 = (0,0,0)$, the solution $(1.4,0.2,0.9)$ is obtained after $16$ iterations and with the residual $2.9 \cdot 10^{-3}$. Note
that in general the functional \eqref{eq:leastsquares} has many local minima and particular care has to be given to the choice of the initial vector. A {\sc Matlab} 
code of the presented computational example and the developed interpolation scheme on spherical Lissajous nodes is provided at 
{\it https://github.com/WolfgangErb/LSphere}.

\begin{table} \centering

    \begin{tabular}{ | r | r | r || r | r | r |}
    \hline
    $\vect{m}$ & $\# \LSm$ & $ \| P_{f}^{(\vect{m})} - \ff \|_\infty$ & $\vect{m}$ & $\# \LSm$ & $ \| P_{f}^{(\vect{m})} -\ff \|_\infty$ \\
    \hline
( 3,  4) &        16 & 0.89150031122784 &  (23, 24) &       576 & 0.00000145422054 \\
( 7,  8) &        64 & 0.17505763622726 &  (27, 28) &       784 & 0.00000003014093 \\
(11, 12) &       144 & 0.01926746577677 &  (31, 32) &      1024 & 0.00000000047887 \\
(15, 16) &       256 & 0.00126029913111 &  (35, 36) &      1296 & 0.00000000000604 \\
(19, 20) &       400 & 0.00005152647682 &  (39, 40) &      1600 & 0.00000000000006 \\
\hline
    \end{tabular}
      	\caption{Approximation error $ \| P_{f}^{(\vect{m})} - \ff \|_\infty$ for the smooth function $\ff$ in \eqref{eq:testfunction}.
  	} \label{tab:LS-4}
\end{table}

\section*{Acknowledgments}
I want to thank both referees very much for their excellent work. Their suggestions helped me a lot to improve and extend this manuscript.

\end{document}